\documentclass[12pt,twoside,leqno,openany]{amsart}
\usepackage{amssymb,amsbsy,amsmath,amsfonts,amscd,times}
\usepackage{graphics,color,xy,xypic,footmisc,fancyhdr,multicol}
\usepackage{fancybox,graphicx,mathrsfs}
\usepackage[T1]{fontenc}
\usepackage{breqn}
\sloppy
\tolerance = 1000
\let\mathcal\mathscr
\newtheorem{theorem}{Theorem}
\newtheorem{lemma}{Lemma}

\newcommand{\C}{\mathbb{C}}

\newcommand{\R}{\mathbb{R}}

\newcommand{\ov}{\overline}
\newcommand{\LL}{\mathcal{L}}
\newcommand{\Lb}{\overline{\mathcal{L}}}

\newcommand{\RR}{\mathcal{R}}
\newcommand{\T}{\mathcal{T}}
\newcommand{\s}{\mathcal{S}}

\newcommand{\Sb}{\ov{\mathcal{S}}}
\newcommand{\zb}{\ov{z}}
\newcommand{\A}{{\sf a}}
\newcommand{\ab}{\ov{{\sf a}}}
\newcommand{\bb}{{\sf b}}
\newcommand{\bbb}{\ov{{\sf b}}}
\newcommand{\cc}{{\sf c}}

\newcommand{\dd}{{\sf d}}

\newcommand{\ee}{{\sf e}}

\newcommand{\ff}{{\sf f}}

\newcommand{\G}{{\sf g}}
\newcommand{\hh}{{\sf h}}

\newcommand{\kk}{{\sf k}}

\begin{document}

\setcounter{page}{97}

\title[]{
Canonical Cartan connection for $5$-dimensional CR-manifolds belonging to general class ${\sf III_2}$}

\author{Samuel Pocchiola}
\address{Samuel Pocchiola ---  D\'epartement de math\'ematiques, b\^atiment 425, Facult\'e des sciences d'Orsay,
Universit\'e Paris-Sud, F-91405 Orsay Cedex, france}
\email{samuel.pocchiola@math.u-psud.fr}

\maketitle

\bigskip
\section*{abstract}
We study the equivalence problem for CR-manifolds belonging to general class ${\sf III_2}$, 
i.e. the $5$-dimensional
CR-manifolds of CR-dimension $1$ and codimension $3$ whose CR-bundle satisfies a degeneracy condition which has been introduced in \cite{MPS}.
For such a CR-manifold $M$, we construct a canonical Cartan connection on a $6$-dimensional principal bundle $P$ on $M$.
This provides a complete set of biholomorphic invariants for $M$.

\section{Introduction}
As highlighted by Henri Poincar\'e \cite{Poincare} in 1907, the (local) biholomorphic equivalence problem between two submanifolds
$M$ and $M^{\prime}$ of $\C^N$
is to determine whether or not there exists a (local) biholomorphism $\phi$ of $\C^N$ such that $\phi(M) = M^{\prime}$.
Elie Cartan \cite{Cartan-1932, Cartan-1933} solved this problem for 
hypersurfaces $M^3 \subset \C^2$ in 1932, as he constructed a ``hyperspherical connection'' on such hypersurfaces
by using the powerful technique which is now referred to as Cartan's equivalence method.

Given a manifold $M$ and some geometric data specified on $M$, which usually appears as a $G$-structure on $M$ (i.e. a reduction of the bundle of coframes of $M$),
Cartan's equivalence method seeks to provide a principal bundle $P$ on $M$ together with a coframe $\omega$ of $1$-forms on $P$ which
is adapted to the geometric structure of $M$ in the following sense:
an isomorphism between two such
geometric structures $M$ and $M^{\prime}$ lifts to a unique isomorphism between $P$ and $P^{\prime}$ which sends $\omega$ on $\omega^{\prime}$.
The equivalence problem between $M$ and $M^{\prime}$ 
is thus reduced to an equivalence problem between $\{e\}$-structures, which is well understood \cite{Olver-1995, Sternberg}.

We recall that a  CR-manifold $M$ is a real manifold endowed with a 
subbundle $L$ of $\C \otimes TM$ of even rank $2n$ such that
\begin{enumerate}
\item{$L \cap \ov{L}$ = \{0\}}
\item{$L$ is formally integrable, i.e. $\big[ L, \, L \big] \subset L$}.
\end{enumerate}
The integer $n$ is the CR-dimension of $M$ and $k = \dim M - 2n$ is the codimension of $M$. 
In a recent attempt \cite{MPS} to solve the equivalence problem for CR-manifolds up to dimension $5$, it has been shown  
that one can restrict the study to six different general classes of CR-manifolds of dimension $\leq 5$, 
which have been referred to as general classes 
${\sf I}$, ${\sf II}$, ${\sf III}_1$, ${\sf III}_2$, ${\sf IV}_1$ and ${\sf IV}_2$. 
The aim of this paper is to provide a solution to the equivalence problem for CR-manifolds which belong to general class ${\sf III_2}$,
that is the CR-manifolds of dimension $5$ and of CR-dimension $1$ such that $\C \otimes TM$ is spanned by $L$, $\ov{L}$ and their Lie brackets up to order no 
less than $3$. 
More precisely, the following rank conditions hold:
\begin{equation*}
\begin{aligned}
3 &= \text{rank}_{\C} \left( L + \ov{L} + \big[ L, \, \ov{L} \big] \right), \\
4 & = \text{rank}_{\C} \left( L + \ov{L} + \big[ L, \, \ov{L} \big] + \big[ L, \big[ L, \, \ov{L} \big] \big] \right), \\
4 & = \text{rank}_{\C} \left( L + \ov{L} + \big[ L, \, \ov{L} \big] + \big[ L, \big[ L, \, \ov{L} \big] \big]+ 
\big [ \ov{L}, \big[ L, \, \ov{L} \big] \big] \right), \\ 
5 & =  \text{rank}_{\C} \left( L + \ov{L} + \big[ L, \, \ov{L} \big] + \big[ L, \big[ L, \, \ov{L} \big] \big] + \big[ \ov{L}, \big[ L, \, \ov{L} \big] \big]
+  \big[ L, \big[ L, \big[ L, \, \ov{L} \big] \big] \big] \right),
\end{aligned}
\end{equation*}
the third one beeing an exceptional degeneracy assumption. 

The main result of the present paper is the following:
\begin{theorem} \label{thm:intro}
Let $M$ be a CR-manifold belonging to general class ${\sf III_2}$.
There exists a 6-dimensional subbundle $P$ of the bundle of coframes $\C \otimes F(M)$ of $M$ and a coframe 
$\omega:=(\Lambda, \tau, \sigma, \rho, \zeta, \ov{\zeta})$ on $P$ such that any CR-diffeomorphism $h$ of $M$ lifts to a bundle isomorphism
$h^*$ of $P$ which satisfies $h^*(\omega) = \omega$. Moreover the structure equations of $\omega$ on $P$ are of the form:
\begin{equation*}
d \tau 
=
4 \, \Lambda \wedge \tau 
+
\mathfrak{J}_1\left. \tau \wedge \zeta \right. 
-
\mathfrak{J}_1\left. \tau \wedge \ov{\zeta} \right.
+
3  \, \mathfrak{J}_1 \left. \sigma \wedge \rho \right.
+
\left.\sigma \wedge \zeta \right.
+
\left. \sigma \wedge \ov{\zeta} \right.,
\end{equation*}
\begin{multline*}
d \sigma = 3 \, \Lambda \wedge \sigma  \\ 
+ \mathfrak{J}_2 \, \tau \wedge \rho 
+ \mathfrak{J}_3 \, \tau \wedge \zeta
+ \ov{\mathfrak{J}_3} \, \tau \wedge \ov{\zeta}
+  
\mathfrak{J}_4 \, \sigma \wedge \rho \\
- \frac{\mathfrak{J}_1}{2} \left. \sigma \wedge \zeta \right.
+
\frac{\mathfrak{J}_1}{2} \left. \sigma \wedge \ov{\zeta} \right. 
 +
\rho \wedge \zeta
+
\rho \wedge \ov{\zeta}
,
\end{multline*}
\begin{multline*}
d \rho
=
2 \Lambda \wedge \rho \\
+
\mathfrak{J}_5
\left. \tau \wedge \sigma \right.  
+
\mathfrak{J}_6
\left. \tau \wedge \rho \right.
+
\mathfrak{J}_7
\left. \tau \wedge \zeta \right.
+
\ov{\mathfrak{J}}_7
\left. \tau \wedge \ov{\zeta} \right. 
+
\mathfrak{J}_8 \left. \sigma \wedge \rho \right.
+
\mathfrak{J}_9
\left. \sigma \wedge \zeta \right. \\
+
\ov{\mathfrak{J}}_9
\left. \sigma \wedge \ov{\zeta} \right.
-
\frac{\mathfrak{J}_1}{2}
\left. \rho \wedge \zeta \right.  
+
 \frac{\mathfrak{J}_1}{2} \left. \rho \wedge \ov{\zeta} \right.
+
i \, \left. \zeta \wedge \ov{\zeta} \right.
,
\end{multline*}
\begin{multline*}
d \zeta
=
\Lambda \wedge \zeta \\
+
\mathfrak{J}_{10}
\left. \tau \wedge \sigma \right.  
+
\mathfrak{J}_{11}
\left. \tau \wedge \rho \right.
+
\mathfrak{J}_{12}
\left. \tau \wedge \zeta \right.
+
\mathfrak{J}_{13}
\left. \tau \wedge \ov{\zeta} \right. 
\\
+
\mathfrak{J}_{14}
\left. \sigma \wedge \rho \right.
+
\mathfrak{J}_{15}
\left. \sigma \wedge \zeta \right.
,\end{multline*}
\begin{equation*}
d \Lambda= \sum_{\nu \mu} 
X_{\nu \mu} \left. \nu \wedge \mu \right., \qquad \nu, \, \mu = \tau, \, \sigma, \, \rho,  \,  \zeta, \ov{\zeta},
\end{equation*}
where $\mathfrak{J}_{i}$, $X_{\nu \mu}$,
are functions on $P$.
\end{theorem}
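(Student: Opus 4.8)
The plan is to apply Cartan's equivalence method in its absorption--normalization--reduction form, starting from an adapted coframe dictated by the rank conditions defining general class $\mathsf{III}_2$, and to push the algorithm until it closes up on an $\{e\}$-structure on a $6$-dimensional bundle.

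First I would recall from \cite{MPS} the construction of an initial $G_0$-structure on $M$. The flag $L\subset L+\ov L\subset L+\ov L+[L,\ov L]\subset L+\ov L+[L,\ov L]+[L,[L,\ov L]]\subset \C\otimes TM$ singled out by the four rank equalities is intrinsically attached to the CR-structure, so a coframe, say $(\rho^0,\kappa^0,\ov\kappa^0,\zeta^0,\ov\zeta^0)$, dual to a frame adapted to this flag is determined up to a finite-dimensional group $G_0$ of ``triangular'' substitutions compatible with the filtration, with the Levi form, and with the third-order bracket data. This produces a principal $G_0$-bundle $P_0\to M$ carrying a tautological, weighted $\C$-valued coframe whose structure equations I would write out explicitly; the exceptional degeneracy hypothesis (that $[\ov L,[L,\ov L]]$ contributes nothing beyond the fourth-order filtrand) is precisely what pins the computation to the single branch corresponding to $\mathsf{III}_2$ rather than to a more generic class.

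Next comes the iterative core. Lifting the tautological coframe by the Maurer--Cartan forms of $G_0$, I would compute $d$ of each lifted form, collect the torsion coefficients, and absorb every absorbable torsion term into redefinitions of the group $1$-forms. The surviving essential torsion is acted on by $G_0$; normalizing the essential coefficients to the constants $0,1,i$ (with $i\,\zeta\wedge\ov\zeta$ in $d\rho$ being the normalized Levi form) reduces the structure group step by step, $G_0\rightsquigarrow G_1\rightsquigarrow\cdots$. I would repeat this loop --- recompute structure equations, absorb, locate the new essential torsion, normalize --- until the residual group is $1$-dimensional, with Maurer--Cartan form $\Lambda$; the integer coefficients $4,3,2,1$ multiplying $\Lambda\wedge\tau$, $\Lambda\wedge\sigma$, $\Lambda\wedge\rho$, $\Lambda\wedge\zeta$ in the final equations record the weights of $\tau,\sigma,\rho,\zeta$ under this last group action.

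Finally I would verify that no further reduction is possible: at the last stage the torsion coefficients on which the remaining $1$-dimensional group still acts effectively have all been consumed, so the leftover torsion becomes a family of honest functions $\mathfrak{J}_1,\dots,\mathfrak{J}_{15}$ on $P:=P_1$, and, crucially, $d\Lambda$ can be expressed purely as a combination of wedge products of $\tau,\sigma,\rho,\zeta,\ov\zeta$ (with coefficients $X_{\nu\mu}$ constrained by $d^2=0$) without introducing any new modified coframe element. This closes the algorithm on an $\{e\}$-structure on the $6$-dimensional $P$ and yields the displayed structure equations. The functoriality statement --- every CR-diffeomorphism $h$ of $M$ lifting to a unique $h^*$ on $P$ with $h^*\omega=\omega$ --- then follows automatically: each ingredient used (the flag, the class of adapted coframes, every normalization) was defined invariantly from the CR data, so $h$ induces compatible bundle isomorphisms at every stage and must preserve the final normalized coframe, while uniqueness holds because a bundle automorphism fixing an $\{e\}$-structure is the identity. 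I expect the main obstacle to be the length and delicacy of the absorption--reduction bookkeeping: at each stage one must correctly separate absorbable from essential torsion, avoid spurious sub-branches (this is where the degeneracy hypothesis is indispensable), and check at the end both that the structure group has been reduced to exactly one dimension and that $d\Lambda$ genuinely closes up instead of forcing a further prolongation.
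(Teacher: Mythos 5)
Your proposal follows essentially the same route as the paper: an initial $G$-structure $P^1$ built from the frame $(\LL,\Lb,\T,\s,\RR)$ adapted to the bracket flag, followed by successive absorption--normalization--reduction steps $P^1\supset P^2\supset P^3\supset P^4\supset P^5$ down to a $1$-dimensional residual group with Maurer--Cartan form $\Lambda$, weights $4,3,2,1$, and the closure $d\Lambda=\sum X_{\nu\mu}\,\nu\wedge\mu$ obtained from the Bianchi--Cartan identities. The only caveat is that the actual content of the theorem lies in carrying out the torsion computations and normalizations explicitly (e.g.\ $\ab=\A B$, then $\bb,\cc,\ff$, then $\dd,\G$, then $\hh$), which your outline defers but correctly anticipates.
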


The model manifold for this class is provided by the CR-manifold ${\sf N} \subset \C^3$  given by the equations:
\begin{equation*}
{\sf N}: \qquad \qquad
\begin{aligned}
w_1 & = \ov{w_1} + 2 \, i \, z \ov{z}, \\
w_2 & = \ov{w_2} + 2 \, i \, z \ov{z} \left( z + \ov{z} \right), \\
w_3 & = \ov{w_3} + 2 i \, z \zb \left( z^2 + \frac{3}{2} \, z \zb + \zb^2 \right),
\end{aligned}
\end{equation*}
Cartan's equivalence method has been applied to this model in \cite{pocchiola2}, where it has been shown that the coframe
$(\Lambda, \tau, \sigma, \rho, \zeta, \ov{\zeta})$ of theorem \ref{thm:intro} satisfy the simplified structure equations:
\begin{equation*}
\begin{aligned}
d \tau &= 4 \left. \Lambda \wedge \tau \right.
+ \left.\sigma \wedge \zeta \right.
+ \left. \sigma \wedge \ov{\zeta} \right., \\
d \sigma &= 3 \left. \Lambda \wedge \sigma \right. + \left. \rho \wedge \zeta \right. 
+ \left.  \rho \wedge \ov{\zeta} \right., \\
d \rho &  =  2 \left. \Lambda \wedge \rho \right. + i \, \left. \zeta \wedge \ov{\zeta} \right., \\
d \zeta &= \left. \Lambda \wedge \zeta \right., \\
d \ov{\zeta} &= \left. \Lambda \wedge \ov{\zeta} \right.,\\
d \Lambda &= 0,
\end{aligned}
\end{equation*}
corresponding to the case where the biholomorphic invariants $\mathfrak{J}_i$ vanish identically.
This result, together with the Lie algebra structure of the inifinitesimal CR-automorphisms of the model,
implies the existence of a Cartan connection on $M$, which we construct in section \ref{connection}.

We start in section \ref{G-structure} with the construction of a canonical $G$-structure $P^1$ on $M$, 
(e.g. a subbundle of the bundle
of coframes of $M$), 
which encodes the equivalence problem for $M$ under CR-automorphisms in the following sense:
a diffeomorphism 
\begin{equation*}
h: M \longrightarrow M
\end{equation*}
is a CR-automorphism of $M$ 
if and only if  
\begin{equation*}
h^* :P^1 \longrightarrow P^1
\end{equation*}
is a $G$-structure isomorphism of $P^1$.
We refer to \cite{MPS, Merker-2013-III, Merker-2013-IV} for details on the results summarized in this section
and to \cite{Sternberg} for an introduction to $G$-structures.
Section \ref{P1} is devoted to reduce successively $P^1$ to four subbundles:
\begin{equation*}
P^5 \subset P^4 \subset P^3 \subset P^2 \subset P^1,
\end{equation*}
which are still adapted to the biholomorphic equivalence problem for $M$. We use Cartan equivalence method, for which we refer to \cite{Olver-1995}.
Eventually a Cartan connection is constructed on $P^5$ in section \ref{connection}.

\section{Initial G-structure}\label{G-structure}

Let $M$ be a CR-manifold belonging to general class ${\sf III_2}$ and $\LL$ be a local generator of the CR-bundle $L$ of $M$.
As $M$ belongs to general class ${\sf III_2}$, the three vector fields $\T$, $\s$, $\RR$, defined by:
\begin{equation*}
\begin{aligned}
\T & := i \, \big[ \LL, \Lb \big], \\
\s & := \big[ \LL, \T \big],\\
\RR & := \big[ \LL, \T \big],
\end{aligned}
\end{equation*}
are such that the following biholomorphic invariant conditions hold:
\begin{alignat*}{2}
3 &= \text{rank}_{\C}\left(\LL, \Lb, \T \right), &\qquad \qquad
4 &=  \text{rank}_{\C}\left( \LL, \Lb, \T , \s\right),\\
4 &=  \text{rank}_{\C}
\left(\LL, \Lb, \T , \s, \Sb \right)
,  &\qquad \qquad
5 &= \text{rank}_{\C}
\left(\LL, \Lb, \T , \s, \RR \right)
.
\end{alignat*}
As a result there exist two functions $A$ and $B$ such that:
\begin{equation*}
\Sb = A \cdot \T + B \cdot \s.
\end{equation*}
From the fact that $ \ov{\Sb} = \s,$ the functions $A$ and $B$ satisfy the relations:
\begin{equation*}
\begin{aligned}
B \ov{B} & = 1, \\
\ov{A} +  \ov{ B} A &= 0.
\end{aligned}
\end{equation*}
There also exist three functions $E$, $F$, $G$,
such that:
\begin{equation*}
\big[\LL,\RR \big]
 = E \cdot \T + F \cdot \s + G \cdot \RR.
\end{equation*}
The five functions $A$, $B$, $E$, $F$, $G$ appear to be fundamental as all other Lie brackets between the 
vector fields $\LL$, $\Lb$, $\T$, $\s$ and $\RR$ can be expressed in terms of these five functions and their
$\{ \LL, \Lb\}$-derivatives.

In the case of an embedded CR-manifold $M \subset \C^4$, we can give an explicit formula for the fundamental vector field $\LL$, and hence
for the functions  $A$, $B$, $P$, $Q$, in terms of 
a graphing function of $M$. We refer to \cite{Merker-2013-V} for details on this question.
Let us just mention that the submanifold $M \subset \C^4$ 
is represented in local coordinates:
\begin{equation*}
(z,w_1,w_2,w_3) = (x + i \, y, \, u_1 + i\, v_1, \,u_2 + i\, v_2, \, u_3 + i\, v_3),
\end{equation*}
as a graph:
\begin{equation*}
\begin{aligned}
v_1 &= \phi_1(x, y, u_1, u_2, u_3), \\
v_2 &= \phi_2(x, y, u_1, u_2, u_3), \\
v_3 &= \phi_3(x, y, u_1, u_2, u_3).
\end{aligned}
\end{equation*}
There exists a unique local generator $\LL$ of $T^{1,0}M$ of the form:
\begin{equation*}
\LL= \frac{\partial}{\partial z} 
+ A^1 \, \frac{\partial}{\partial u_1} 
+  A^2 \, \frac{\partial}{\partial u_2} 
+ A^3 \, \frac{\partial}{\partial u_3},  
\end{equation*}
having conjugate:
\begin{equation*}
\Lb= \frac{\partial}{\partial \zb} 
+ \ov{A^1} \, \frac{\partial}{\partial u_1} 
+  \ov{A^2} \, \frac{\partial}{\partial u_2} 
+ \ov{A^3} \, \frac{\partial}{\partial u_3},  
\end{equation*}
which is a generator of $T^{0,1}M$. The explicit expressions of the functions $A^1$, $A^2$ and $A^3$ in terms of $\phi$
can be found in  \cite{Merker-2013-V}.

Returning to the general case of abstract CR-manifolds, let 
\begin{equation*}
\omega_0 :=\left(\tau_0, \sigma_0, \rho_0, \zeta_0, \ov{\zeta}_0 \right)
\end{equation*}
be the dual coframe of 
$\left(\RR, \s, \T, \LL, \Lb \right)$.  We have:
\begin{lemma}{\cite{Merker-2013-IV}.}
The structure equations enjoyed by $\omega_0$ are of the form: 
\begin{dgroup*}
\begin{dmath*}
d \tau_0 = T  \left. \tau_0 \wedge \sigma_0 \right. + Q  \left. \tau_0 \wedge \rho_0 \right.
+ K  \left. \tau_0 \wedge \zeta_0 \right. + G \left. 
\tau_0 \wedge \zeta_0 \right.
+ N  \left. \sigma_0 \wedge \rho_0 \right. + \left. \sigma_0 \wedge \zeta_0 \right. 
+ B  \left. \sigma_0 \wedge \ov{\zeta}_0 \right., 
\end{dmath*}
\begin{dmath*}
d\sigma_0 = S \left. \tau_0 \wedge \sigma_0  \right.+ P \left. \tau_0 \wedge 
\rho_0 \right. + F \left.  \tau_0 \wedge \zeta_0 \right.+ J \left. \tau_0 \wedge \ov{\zeta}_0 \right.
+ M \left. \sigma_0 \wedge \rho_0 \right.
+
\left( \LL(B) + A \right) \left. \sigma_0 \wedge \ov{\zeta}_0 \right.+ B \left. \rho_0 \wedge \ov{\zeta}_0 
\right.
+ \left. \rho_0 \wedge \zeta_0 \right.,
\end{dmath*}
\begin{dmath*}
d\rho_0  = R \left. \tau_0 \wedge \sigma_0 \right. 
+ 
O \left.  \tau_0 \wedge \rho_0  \right. + H \left. \tau_0 \wedge \zeta_0 \right.
+ E \left. \tau_0 \wedge \zeta_0 \right. + L \left. \sigma_0 \wedge \rho_0 \right.
+ \LL(A) \left.  \sigma_0 \wedge \ov{\zeta}_0 \right. + A \left. \rho_0 \wedge \ov{\zeta}_0 \right.
+ i \left. \zeta_0 \wedge \ov{\zeta}_0 \right., 
\end{dmath*}
\begin{dmath*}
d \zeta_0 = 0,
\end{dmath*}
\begin{dmath*}
d \ov{\zeta}_0 = 0,
\end{dmath*} 
\end{dgroup*}
where the twelve functions:
\begin{equation*}
H,\, J,\, K,\, L,\, M,\, N,\, O,\, P,\, Q,\, R,\, S,\, T,  
\end{equation*}
can be expressed 
in terms of the five fundamental functions:
\begin{equation*}
A,\, B,\, E,\, F,\, G,
\end{equation*}
and their $\{ \LL, \Lb\}$-derivatives.
\end{lemma}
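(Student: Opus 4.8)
The plan is to use the elementary duality between the structure equations of a coframe and the commutators of its dual frame: writing $(e_1,\dots,e_5):=(\RR,\s,\T,\LL,\Lb)$ with dual coframe $\omega_0=(\tau_0,\sigma_0,\rho_0,\zeta_0,\ov\zeta_0)$ and $[e_i,e_j]=\sum_k c_{ij}^k\,e_k$, Cartan's formula gives $d\theta^k=-\sum_{i<j}c_{ij}^k\,\theta^i\wedge\theta^j$. The whole statement therefore reduces to computing the ten Lie brackets among $\RR,\s,\T,\LL,\Lb$ and to showing that each of them lies in $V:=\mathrm{span}_\C(\T,\s,\RR)$ — this is exactly the vanishing of the $\LL$- and $\Lb$-components of every bracket, hence $d\zeta_0=d\ov\zeta_0=0$ together with the displayed shape of $d\tau_0,d\sigma_0,d\rho_0$ — with, moreover, every structure coefficient a universal expression in $A,B,E,F,G$ and their $\{\LL,\Lb\}$-derivatives.

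First I would record the brackets that are immediate. From the definitions of $\T,\s,\RR$: $[\LL,\Lb]=-i\T$, $[\LL,\T]=\s$, $[\LL,\s]=\RR$; from the definition of $E,F,G$: $[\LL,\RR]=E\T+F\s+G\RR$; and, as recorded above, $\Sb=\ov\s=[\Lb,\T]=A\T+B\s$ with $B\ov B=1$, $\ov A+\ov BA=0$. All five lie in $V$. The Jacobi identity for $\LL,\Lb,\T$, together with $[[\LL,\Lb],\T]=[-i\T,\T]=0$, then gives $[\Lb,\s]=[\LL,\Sb]=\LL(A)\,\T+(A+\LL(B))\,\s+B\,\RR\in V$. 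This already produces all the terms of $d\tau_0,d\sigma_0,d\rho_0$ carrying the coefficients $1,B,E,F,G,A,\LL(A),\LL(B),i$.

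The four remaining brackets $[\Lb,\RR],[\T,\s],[\T,\RR],[\s,\RR]$ are the crux. The decisive point is that $V$ is conjugation-stable: since $\RR=[\LL,\s]$ one has $\Rb=\ov\RR=[\Lb,\Sb]=\Lb(A)\,\T+A\,\Sb+\Lb(B)\,\s+B\,[\Lb,\s]$, which by the previous step is an explicit element of $V$. Hence $[\LL,-]$ maps $V$ into $V$ explicitly (via $[\LL,\T]=\s$, $[\LL,\s]=\RR$, $[\LL,\RR]=E\T+F\s+G\RR$ and the Leibniz rule), and applying it to $\Rb$ and conjugating — using $\ov\T=\T$, $\ov\s=\Sb$, $\ov\RR=\Rb$ — gives $[\Lb,\RR]=\ov{[\LL,\Rb]}\in V$ explicitly. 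This is what breaks the apparent circularity of the Jacobi relations: $[\Lb,\RR]$ is obtained independently of $[\T,\s]$ rather than only in terms of it. Then $[\T,\s]=i[[\LL,\Lb],\s]=i\bigl([\LL,[\Lb,\s]]-[\Lb,\RR]\bigr)$, next $[\T,\RR]=i\bigl([\LL,[\Lb,\RR]]-[\Lb,[\LL,\RR]]\bigr)$, and finally $[\s,\RR]=[\LL,[\T,\RR]]-[\T,[\LL,\RR]]$ are all explicit, each step also reducing any $\T$-, $\s$- or $\RR$-directional derivative to iterated $\{\LL,\Lb\}$-derivatives through $\T=i[\LL,\Lb]$, $\s=[\LL,\T]$, $\RR=[\LL,\s]$. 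Since every bracket now manifestly lies in $V$, we get $d\zeta_0=d\ov\zeta_0=0$, the displayed form of the three other equations, and the asserted dependence of $H,J,K,L,M,N,O,P,Q,R,S,T$ on $A,B,E,F,G$.

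The real work lies, first, in arranging the Jacobi bookkeeping so that $[\Lb,\RR]$ — and with it the ``vertical'' brackets $[\T,\s],[\T,\RR],[\s,\RR]$ — is produced non-circularly, the conjugation argument above being precisely what achieves this; and second, in the lengthy but purely mechanical reduction of every $\T$-, $\s$- or $\RR$-derivative to an explicit polynomial in $\{\LL,\Lb\}$-derivatives of $A,B,E,F,G$. In the embedded case $M\subset\C^4$ one may shortcut this: there $\T,\s,\RR$ span $\mathrm{span}(\partial_{u_1},\partial_{u_2},\partial_{u_3})$, which is visibly involutive and transverse to $\mathrm{span}(\partial_z,\partial_{\zb})$, so the conjugation-stability and bracket-closure of $V$ are automatic and the coefficients can be computed directly from the graphing function as in \cite{Merker-2013-V}.
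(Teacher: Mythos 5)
Your argument is correct and is essentially the derivation behind this lemma (the paper itself gives no proof, deferring to \cite{Merker-2013-IV}): one dualizes the full bracket table of $(\RR,\s,\T,\LL,\Lb)$ via $d\theta^k(e_i,e_j)=-\theta^k([e_i,e_j])$, and the one non-routine point --- obtaining $[\Lb,\RR]$ without circular reliance on $[\T,\s]$ --- is handled exactly as you do, by conjugating $\RR=[\LL,\s]$ to get $\Rb=[\Lb,\Sb]$ explicitly inside $\mathrm{span}_{\C}(\T,\s,\RR)$ and then setting $[\Lb,\RR]=\ov{[\LL,\Rb]}$. The remaining brackets then follow from Jacobi identities and the Leibniz rule as you describe, so the bracket-closure of $\mathrm{span}_{\C}(\T,\s,\RR)$ yields $d\zeta_0=d\ov{\zeta}_0=0$ and the stated form of the other equations, with all coefficients reduced to $\{\LL,\Lb\}$-derivatives of $A,B,E,F,G$.
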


Let $h: \, M \longrightarrow M$ be a CR-automorphism of $M$. As we have 
\begin{equation*}
h_* \left(L \right) = L,  
\end{equation*}
there exists a non-vanishing complex-valued function $\A$ on $M$ such that:
\begin{equation*}
h_* \left( \LL \right) = \A \, \LL.
\end{equation*}
From the definition of $\T$, $\s$, $\RR$ and the invariance 
\begin{equation*}
h_* \left( \big[ X, Y \big] \right) = \big[ h_*(X), h_*(Y) \big]
\end{equation*}
for any vector fields $X$, $Y$ on $M$, we easily get the existence of eight functions
\begin{equation*}
\bb, \cc , \dd , \ee , \ff, \G , \hh, \kk: M \longrightarrow \C,
\end{equation*}
such that 
\begin{equation*}
h_*
\begin{pmatrix}
\LL \\
\Lb \\
\T \\
\s \\
\RR 
\end{pmatrix}
=
\begin{pmatrix}
\A & 0 & 0 & 0 & 0 \\
 0 & \ab & 0 & 0 & 0 \\
\bb & \bbb & \A \ab & 0 &0 \\
\ee & \dd & \cc & \A^2 \ab & 0 \\
\kk & \hh & \G & \ff & \A^3 \ab
\end{pmatrix}
\cdot
\begin{pmatrix}
\LL \\
\Lb \\
\T \\
 \s \\
 \RR 
\end{pmatrix}.
\end{equation*}

This is summarized in the following lemma: 
\begin{lemma}{\cite{Merker-2013-III}.}
\label{lemma:matrix}
Let $h: \, M \longrightarrow M$ a CR-automorphism of $M$ and let $G_1$ 
be the subgroup of ${\sf GL}_5(\C)$:
\begin{equation*}
G_1:= \left\{
\begin{pmatrix}
{\A^3} \ab & 0 & 0 & 0 & 0 \\
\ff  & \A^2 \ab & 0 & 0 & 0 \\
\G & \cc & \A \ab & 0 & 0 \\
\hh & \dd & \bb & \A & 0 \\
\kk & \ee & \bbb & 0 & \ab
\end{pmatrix}
, \, 
\A \in \C \setminus{\{0\}}, \,
 \bb, \cc, \dd, \ee, \ff, \G, \hh, \kk \in \C \right \}.
\end{equation*}
Then the pullback $\omega$ of $\omega_0$ by $h$,
$\omega:= h^* \omega_0,$
satisfies:
\begin{equation*}
\omega = g \cdot \omega_0,
\end{equation*}
where $g$ is smooth (locally defined) function 
$M \stackrel{g}{\longrightarrow} G_1$.
\end{lemma}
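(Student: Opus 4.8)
The plan is to read off the transformation law for the coframe directly from the matrix relation for $h_*$, using only linear algebra plus the defining relations among the entries. First I would record that the coframe $\omega_0 = (\tau_0,\sigma_0,\rho_0,\zeta_0,\ov\zeta_0)$ is, by construction, the dual coframe of the ordered frame $(\RR,\s,\T,\LL,\Lb)$. If $h$ is a CR-automorphism, the preceding display expresses $h_*$ acting on the column vector $(\LL,\Lb,\T,\s,\RR)^{t}$ as multiplication by a lower-triangular matrix $N$ whose diagonal entries are $(\A,\ab,\A\ab,\A^2\ab,\A^3\ab)$ and whose lower entries are the eight functions $\bb,\cc,\dd,\ee,\ff,\G,\hh,\kk$. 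Reordering the frame as $(\RR,\s,\T,\LL,\Lb)$ conjugates $N$ by the order-reversing permutation on the first three coordinates (swapping $\LL,\Lb,\T \mapsto \T,\s,\RR$ appropriately); since this reordering is exactly the one that turns $N$ into the matrix displayed in the statement of Lemma \ref{lemma:matrix}, the frame $(\RR,\s,\T,\LL,\Lb)$ transforms under $h_*$ by left multiplication by that lower-triangular matrix, call it $\widetilde g \in {\sf GL}_5(\C)$.

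Next I would invoke the standard duality between frames and coframes: if a frame transforms by $X \mapsto M\cdot X$ then the dual coframe transforms by $\theta \mapsto (M^{-1})^{t}\,\theta$, and pulling back along $h$ reverses this so that $h^*\omega_0$ is obtained from $\omega_0$ by a left action of a matrix depending pointwise on the entries of $\widetilde g$. Concretely, from $h_*(\RR,\s,\T,\LL,\Lb)^{t} = \widetilde g\cdot(\RR,\s,\T,\LL,\Lb)^{t}$ one gets $h^*\omega_0 = \widetilde g^{t}\cdot\omega_0$ by evaluating both sides on the frame; a short check shows the resulting matrix is again lower-triangular with diagonal $(\A^3\ab,\A^2\ab,\A\ab,\A,\ab)$, which is precisely the shape of the elements of $G_1$. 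It remains to verify that the off-diagonal entries of $\widetilde g^{t}$ are free, i.e. that as $h$ ranges over CR-automorphisms they are not constrained beyond being arbitrary complex functions; this follows because $\bb,\cc,\dd,\ee,\ff,\G,\hh,\kk$ were introduced in the excerpt with no relation among them, only $\A$ being subject to $\A\neq 0$.

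Finally I would define $g := \widetilde g^{t}$ as a function $M \to G_1$: it is smooth because each matrix entry is a polynomial (with the single inversion of $\A$, which is nowhere zero) in the functions $\A,\bb,\cc,\dd,\ee,\ff,\G,\hh,\kk$, and these are smooth on the domain of $h$ since $h$ is a diffeomorphism and the $\LL,\Lb,\T,\s,\RR$ are smooth frames. This yields $\omega = h^*\omega_0 = g\cdot\omega_0$ with $g$ valued in $G_1$, which is the assertion.

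The main obstacle, and the only genuinely delicate point, is bookkeeping: one must be scrupulous about the order of the frame versus the coframe, about whether $h_*$ or $h^*$ appears, and about the transpose-inverse in frame/coframe duality, so that the lower-triangular shape of $G_1$ (as opposed to upper-triangular, or the shape of $N$) comes out correctly. Everything else is immediate from the definitions already set up in this section and from Lemma 1 of \cite{Merker-2013-IV} guaranteeing $d\zeta_0 = d\ov\zeta_0 = 0$, which is what makes the bottom-right $2\times 2$ block of $\widetilde g$ diagonal with entries $\A,\ab$ rather than a full block.
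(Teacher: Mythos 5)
Your strategy coincides with the paper's (which itself offers no proof beyond the preceding derivation and a citation to \cite{Merker-2013-III}): obtain the lower-triangular matrix for $h_*$ on the frame from $h_*(L)=L$ together with bracket naturality, then dualize. The conclusion is correct, but your write-up contains a self-cancelling bookkeeping error. Conjugating $N$ by the permutation that reorders $(\LL,\Lb,\T,\s,\RR)$ into $(\RR,\s,\T,\LL,\Lb)$ produces an \emph{upper}-triangular matrix $\widetilde g$, with first row $(\A^3\ab,\ff,\G,\kk,\hh)$ — not the lower-triangular matrix displayed in the Lemma, as you assert. It is only after the transpose coming from frame/coframe duality, $h^*\omega_0=\widetilde g^{\,t}\,\omega_0$, that one lands on the $G_1$ shape. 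As written, your two claims — that $\widetilde g$ is already the displayed lower-triangular matrix, and that $\widetilde g^{\,t}$ is again lower-triangular — cannot both hold; the correct statement is that $\widetilde g$ is upper-triangular and $\widetilde g^{\,t}$ is the $G_1$-shaped matrix. Two further small points: the transpose actually places $\kk,\ee$ in the $\zeta_0$-row and $\hh,\dd$ in the $\ov{\zeta}_0$-row, i.e.\ the labels are permuted relative to the paper's display — immaterial, since these entries are independent free parameters of $G_1$; and your remark that the eight functions satisfy ``no relation'' overlooks the one relation that membership in $G_1$ does require, namely that the $(5,3)$ entry be $\bbb=\ov{\bb}$ when the $(4,3)$ entry is $\bb$ — this holds because $\T=i\big[\LL,\Lb\big]$ is a real vector field, so the coefficients of $\LL$ and $\Lb$ in $h_*\T$ are conjugate.
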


Let $P^1$ be the $G_1$-structure on $M$ defined by the coframes $\omega$ of the form
\begin{equation*}
\omega:= g \cdot \omega_0, \qquad g \in G_1
\end{equation*}
The next section is devoted  to construct four
subgroups of $G_1$:
\begin{equation*}
G_5 \subset G_4 \subset G_3 \subset G_2 \subset G_1,
\end{equation*}
and four $G_i$-structures on $M$: 
\begin{equation*}
P^5 \subset P^4 \subset P^3 \subset P^2 \subset P^1,
\end{equation*}
which are adapted to the biholomorphic equivalence problem for $M$
in the sense that a diffeomorphism $h$ of $M$ is a CR-automorphism if and only if  $h^*$ is a
$G_i$-structure isomorphism of $P^i$.

\section{Reductions of $P^1$}\label{P1}

The coframe $\omega_0$ gives a natural (local) trivialisation $P^1 \stackrel{tr} \longrightarrow M \times G_{1}$ from which
we may consider any differential form on $M$  
(resp. $G_1$) as a differential form on $P^1$ through the pullback by the first
(resp. the second) component of $tr$.
With this identification, 
the structure equations of $P^1$ are naturally obtained by the formula:
\begin{equation} 
\label{eq:lifted}
d \omega =  dg \cdot g^{-1} \wedge \omega + g \cdot d \omega_{0}.
\end{equation}
The term $ g \cdot d \omega_{0}$  contains the so-called torsion coefficients of $P^1$.
A $1$-form $\widetilde{\alpha}$ on $P^1$ is called a modified Maurer-Cartan form if its restriction to any fiber of $P^1$ 
is a Maurer-Cartan form of $G_1$, or equivalently, if it is of the form:
\begin{equation*}
\widetilde{\alpha} := \alpha - x_{\tau}\, \tau - x_{\sigma} \, \sigma  -x_{\rho} \, \rho 
- x_{\zeta} \, \zeta \, -
x_{\overline{\zeta}} \, \overline{\zeta},
\end{equation*}
where $x_{\sigma}$, $x_{\rho}$, $x_{\zeta}$, $x_{\ov{\zeta}}$, are arbitrary complex-valued functions on $M$ and
where $\alpha$
is a Maurer-Cartan form of $G_1$.

A basis for the Maurer-Cartan forms of $G_1$ is given by the following $1$-forms:
\begin{equation*}
\begin{aligned}
\alpha^1  &: = {\frac {{ d\A}}{\A}}, \\
\alpha^2 &:= -{\frac {\bb{ d\A}}{{\A}^{2}{ \ab}}}+{\frac {{ d\bb}}{\A{ \ab}}}, \\
\alpha^3  &:=-{\frac {\cc{ d\A}}{{ \ab}\,{\A}^{3}}}-{\frac {\cc{ d\ab
}}{{{ \ab}}^{2}{\A}^{2}}} + { \frac {{ d\cc}}{{\A}^{2}{ \ab}}}, \\
\alpha^4 & :=-{\frac { \left( \dd\A{ \ab}-\bb\cc \right) { d\A}}{{\A}^{4}{
{ \ab}}^{2}}}-{\frac {\cc{ d\bb}}{{\A}^{3}{{ \ab}}^{2}}}+{\frac {{
 d\dd}}{{\A}^{2}{ \ab}}}, \\
\alpha^5 & :=-{\frac { \left( \ee\A{ \ab}-{ \bbb}\,\cc \right) { d\ab}
}{{\A}^{3}{{ \ab}}^{3}}}-{\frac {\cc{ d\bbb}}{{\A}^{3}{{ \ab}}^{2}}}+
{\frac {{ d\ee}}{{\A}^{2}{ \ab}}}, \\
\alpha^6 & :=-2\,{\frac {\ff{ d\A}}{{ \ab}\,{\A}^{4}}}-{\frac {\ff{ 
d\ab}}{{\A}^{3}{{ \ab}}^{2}}}+{\frac {{ d\ff}}{{ \ab}\,{\A}^{3}}}, \\
\alpha^7 & :=-{\frac { \left( \G{\A}^{2}{ \ab}-\cc\ff \right) { d\A}}{{{
 \ab}}^{2}{\A}^{6}}}-{\frac { \left( \G{\A}^{2}{ \ab}-\cc\ff \right) {
 d\ab}}{{{ \ab}}^{3}{\A}^{5}}}-{\frac {\ff{ d\cc}}{{\A}^{5}{{ \ab}}
^{2}}}+{\frac {{ d\G}}{{ \ab}\,{\A}^{3}}}, \\
\alpha^8 &: =-{\frac { \left( \hh{\A}^{3}{{ \ab}}^{2}- \dd \ff\A{ \ab}-\bb \G{\A}
^{2}{ \ab}+\bb\cc\ff \right) { d\A}}{{\A}^{7}{{ \ab}}^{3}}}-{\frac {
 \left( \G{\A}^{2}{ \ab}-\cc\ff \right) { d\bb}}{{\A}^{6}{{ \ab}}^{3}}}-
{\frac {\ff{ d\dd}}{{\A}^{5}{{ \ab}}^{2}}}+{\frac {{ d\hh}}{{ \ab}
\,{\A}^{3}}},\\
\alpha^9 & :=-{\frac { \left( \kk{\A}^{3}{{ \ab}}^{2}-\ee\ff\A{ \ab}-{ 
\bbb}\,\G{\A}^{2}{ \ab}+{ \bbb}\,\cc\ff \right) { d\ab}}{{\A}^{6}{{ \ab}
}^{4}}}-{\frac { \left( \G{\A}^{2}{ \ab}-\cc\ff \right) { d\bbb}}{{\A}^{6}
{{ \ab}}^{3}}}-{\frac {\ff{ d\ee}}{{\A}^{5}{{ \ab}}^{2}}}+{\frac {{
 d\kk}}{{ \ab}\,{\A}^{3}}},
\end{aligned}
\end{equation*}
together with their conjugates.

We derive the structure equations of $P^1$ from the relations (\ref{eq:lifted}). The expression 
of $d \tau$ is:
\begin{multline*}
d \tau = 3 \left.\alpha^1 \wedge \tau \right. + \left.\ov{\alpha^1} \wedge \tau \right. \\
+ T^{\tau}_{\tau \sigma} \left. \tau \wedge \sigma \right.
+ T^{\tau}_{\tau \rho} \left. \tau \wedge \rho \right.
+ T^{\tau}_{\tau \zeta} \left. \tau \wedge \zeta \right. \\
+ T^{\tau}_{\tau \ov{\zeta}} \left. \tau \wedge \ov{\zeta} \right.
+ T^{\tau}_{\sigma \rho} \left.\sigma \wedge \rho \right.
+ \left.\sigma \wedge \zeta \right.
-  \frac{\A}{\ab} \, B \, \left. \sigma \wedge \ov{\zeta} \right. 
\end{multline*}

The coefficient 
\begin{equation*}
\frac{\A }{\ab} \, B, 
\end{equation*}
which can not be absorbed for any choice of the modified Maurer-Cartan form $\widetilde{\alpha}^1$,
is referred to as an essential torsion coefficient.
From standard results on Cartan theory (see \cite{Olver-1995, Sternberg}), a diffeomorphism of $M$
is an isomorphism of the $G_1$-structure $P^1$ if and only if it is an isomorphism of the reduced 
bundle $P^2 \subset P^1$ consisting
of those coframes $\omega$ on $M$ such that  
\begin{equation*}
\frac{\A }{\ab} \, B  = 1. 
\end{equation*}
This is equivalent to the normalization: 
\begin{equation*}
 \ab = \A B.
\end{equation*}

A coframe $\omega \in P^2$ is related to the coframe $\omega_0$ by the relations:
\begin{alignat*}{2}
\tau & = \A^4 \, B \, \tau_0,  & \qquad \qquad 
\sigma &= \ff \, \tau_0 + \A^3 \, B \, \sigma_0, \\
\rho &= \G \, \tau_0 + \cc \, \sigma_0 + \A^2 \, B \, \rho_0, & \qquad \qquad
\zeta & = \hh \, \tau_0 + \dd \,  \sigma_0 + \bb \, \rho_0 + \A \, \zeta_0,\\
\ov{\zeta} & = \kk \, \tau_0 + \ee \, \sigma_0 + \bbb \, \rho_0 + \A \, B \, \ov{\zeta}_0,
\end{alignat*}
which are equivalent to:
\begin{alignat*}{2}
\tau & = \left. \A^\prime \right.^4 \, \tau_1, & \qquad \qquad
\sigma &= \ff^\prime \, \tau_1 + \left. \A^\prime \right.^3 \, \sigma_1, \\
\rho &= \G^\prime \, \tau_1 + \cc^\prime \, \sigma_1 + \left. \A^\prime \right.^2 \, \rho_1, & \qquad \qquad
\zeta & = \hh^\prime \, \tau_1 + \dd^\prime \,  \sigma_1 + \bb \, \rho_1 +  \A^\prime \, \zeta_1,\\
\ov{\zeta} & = \kk^\prime \, \tau_1 + \ee^\prime \, \sigma_1 + \bbb \, \rho_1 +  \A^{\prime} \, \ov{\zeta}_1,
\end{alignat*} 
where:
\begin{equation*}
\tau_1:= \frac{\tau_0}{B}, \qquad
\sigma_1:  = \frac{\sigma_0}{B^{\frac{1}{2}}}, \qquad  \rho_1  = \rho_0, \qquad 
\zeta_1 : = \frac{\zeta_0}{B^{\frac{1}{2}}},
\end{equation*}
and
\begin{equation*}
x^{\prime} := \left\{ \begin{aligned}
&x \cdot B^{\frac{1}{2}}, \qquad  \qquad &\text{for}  \quad x = \A, \, \cc, \,  \dd, \, \ee, \\
&x \cdot B, \qquad \qquad  & \text{for}  \quad x = \ff, \, \G, \,  \hh, \, \kk.
\end{aligned}
\right.
\end{equation*}

We notice that $\A^{\prime}$ is a real parameter, and that $\tau_{1}$ is a real $1$-form.
Let $\omega_1$ be the coframe $\omega_1:=\left(\tau_1, \sigma_1, \rho_1, \zeta_1, \ov{\zeta}_1 \right)$, 
and $G_2$ be the subgroup of $G_1$:
\begin{equation*}
G_2:= \left\{
\begin{pmatrix} 
{\A^4}  & 0 & 0 & 0 & 0 \\
\ff  & \A^3 & 0 & 0 & 0 \\
\G & \cc & \A^2 & 0 & 0 \\
\hh & \dd & \bb & \A & 0 \\
\kk & \ee & \bbb & 0 & \A
\end{pmatrix}
, \, 
\A \in \R \setminus \{0\}, \,
\bb, \cc, \dd, \ee, \ff, \G, \hh, \kk \in \C \right \}.
\end{equation*}  
A coframe $\omega$ on $M$ belongs to $P^2$ if and only if there is a local function
$g: M \stackrel{g}{\longrightarrow}  G_2$ such that $\omega = g \cdot \omega_1$, namely $P^2$ is a $G_2$ structure on $M$. 

The Maurer-Cartan forms of $G_2$ are given by:
\begin{equation*}
\begin{aligned}
\beta^1  &: = {\frac {{ d\A}}{\A}}, \\
\beta^2 &:= -{\frac {\bb d\A }{{\A}^{3}}}+{\frac {{ d\bb}}{\A^2}}, \\
\beta^3  &:=- 2 \, {\frac {\cc  d\A }{{\A}^{4}}} + { \frac {{ d\cc}}{{\A}^{3}}}, \\
\beta^4 & =-{\frac { \left( \dd\A^2 -\bb \cc \right) { d\A}}{{\A}^{6}}}-{\frac {\cc{ d\bb}}{{\A}^{5}}}+{\frac {{
 d\dd}}{{\A}^{3}}}, \\
\beta^5 & =-\frac { \left( \ee\A^2-{ \bbb}\,\cc \right)  d\A
}{\A^6}-\frac {\cc{ d\bbb}}{\A^{5}}+
\frac { d\ee}{\A^{3}}, \\
\beta^6 & = -3 \,{\frac {\ff{ d\A}}{{\A}^{5}}} + {\frac {{ d\ff}}{{\A}^{4}}}, \\
\beta^7 & =- 2 \, {\frac { \left( \G{\A}^{3}-\cc \ff \right) 
{ d\A}}{\A^{8}}} -{\frac {\ff{ d\cc}}{{\A}^{7}}}+{\frac {{ d\G}}{{\A}^{4}}}, \\
\beta^8 & =-{\frac { \left( \hh{\A}^{5}- \dd \ff\A^2-\bb \G{\A}
^{3}+\bb\cc\ff \right) { d\A}}{{\A}^{10}}}-{\frac {
 \left( \G{\A}^{3}-\cc\ff \right) { d\bb}}{{\A}^{9}}}-
{\frac {\ff{ d\dd}}{\A^{7}}}+{\frac {{ d\hh}}{
{\A}^{4}}},\\
\beta^9 & =-{\frac { \left( \kk{\A}^{5}-\ee\ff\A^2-{ 
\bbb}\,\G{\A}^{3}+{ \bbb}\,\cc\ff \right) { d\A}}{{\A}^{10}}}-{\frac { \left( \G{\A}^{3}-\cc\ff \right) { d\bbb}}{{\A}^{9}}}
-
{\frac {\ff{ d\ee}}{{\A}^{5}{{ \ab}}^{2}}}+{\frac {{
 d\kk}}{{\A}^{4}}},
\end{aligned}
\end{equation*}
together with $\ov{\beta^i}, \,\,\, i=2 \dots 9$.

Using formula (\ref{eq:lifted}), we get the structure equations of $P^2$:
\begin{multline*}
d \tau 
=
4 \, \beta^1 \wedge \tau \\
+
U^{\tau}_{\tau \sigma} \, \tau \wedge \sigma
+
U^{\tau}_{\tau \rho} \, \tau \wedge \rho
+
U^{\tau}_{\tau \zeta} \, \tau \wedge \zeta 
+
U^{\tau}_{\tau \ov{\zeta}} \, \tau \wedge \ov{\zeta} \\
+
U^{\tau}_{\sigma \rho} \, \sigma \wedge \rho
+
\sigma \wedge \zeta
+
\sigma \wedge \ov{\zeta},
\end{multline*}

\begin{multline*}
d \sigma = 3 \, \beta^1 \wedge \sigma + \beta^6 \wedge \tau \\
+
U^{\sigma}_{\tau \sigma} \left. \tau \wedge \sigma \right.
+
U^{\sigma}_{\tau \rho} \left. \tau \wedge \rho \right.
+
U^{\sigma}_{\tau \zeta} \left. \tau \wedge \zeta \right. \\
+
U^{\sigma}_{\tau \ov{\zeta}} \left. \tau \wedge \ov{\zeta} \right.
+
U^{\sigma}_{\sigma \rho} \left. \sigma \wedge \rho \right.
+
U^{\sigma}_{\sigma \zeta} \left. \sigma \wedge \zeta \right. \\
+
U^{\sigma}_{\sigma \ov{\zeta}} \left. \sigma \wedge \ov{\zeta} \right.
+
\rho \wedge \zeta
+
\rho \wedge \ov{\zeta}
\end{multline*}

\begin{multline*}
d \rho
=
2 \beta^{1} \wedge \rho + \beta^3 \wedge \sigma + \beta^7 \wedge \tau \\
+
U^{\rho}_{\tau \sigma} \, \tau \wedge \sigma
+
U^{\rho}_{\tau \rho} \, \tau \wedge \rho
+
U^{\rho}_{\tau \zeta} \, \tau \wedge \zeta 
+
U^{\rho}_{\tau \overline{\zeta}} \, \rho \wedge \overline{\zeta} 
+
U^{\rho}_{\sigma \rho} \, \sigma \wedge \rho \\
+
U^{\rho}_{\sigma \zeta} \, \sigma \wedge \zeta
+
U^{\rho}_{\sigma \ov{\zeta}} \, \sigma \wedge \ov{\zeta} 
+
U^{\rho}_{\rho \zeta} \, \rho \wedge \zeta  
+
U^{\rho}_{\rho \ov{\zeta}} \, \rho \wedge \ov{\zeta} 
+
i \, \zeta \wedge \ov{\zeta}
,\end{multline*}

\begin{multline*}
d \zeta
=
{\beta}^{1} \wedge \zeta + {\beta}^{2} \wedge \rho + {\beta}^{4} \wedge \sigma + \beta^8 \wedge \tau \\
+
U^{\zeta}_{\tau \sigma} \, \tau \wedge \sigma
+
U^{\zeta}_{\tau \rho} \, \tau \wedge \rho
+
U^{\zeta}_{\tau \zeta} \, \tau \wedge \zeta 
+
U^{\zeta}_{\tau \ov{\zeta}} \, \tau \wedge \ov{\zeta} \\
+
U^{\zeta}_{\sigma \rho} \, \sigma \wedge \rho
+
U^{\zeta}_{\sigma \zeta} \, \sigma \wedge \zeta 
+
U^{\zeta}_{\sigma \ov{\zeta}} \, \sigma \wedge \ov{\zeta} 
+
U^{\zeta}_{\rho \zeta} \, \rho \wedge \zeta \\
+
U^{\zeta}_{\rho \overline{\zeta}} \, \rho \wedge \overline{\zeta}
+
U^{\zeta}_{\zeta \overline{\zeta}} \, \zeta \wedge \overline{\zeta}
.\end{multline*}
Introducing the modified Maurer-Cartan forms:
\begin{equation*}
\widetilde{\beta}^i= \beta^i  -y_{\tau}^i \, \tau - y_{\sigma} \, \sigma - y_{\rho}^i \, \rho  - y_{\zeta}^i \, \zeta \, 
- y_{\overline{\zeta}}^i \, \overline{\zeta},
\end{equation*}
the structure equations rewrite:
\begin{multline*}
d \tau 
=
4 \, \widetilde{\beta}^1 \wedge \tau \\
+
\left( U^{\tau}_{\tau \sigma} - 4 \, y^1_{\sigma} \right) \, \left. \tau \wedge \sigma \right.
+
\left( U^{\tau}_{\tau \rho} - 4 \, y^1_{\rho} \right) \, \left. \tau \wedge \rho \right. \\
+
\left(U^{\tau}_{\tau \zeta} - 4 \, y^1_{\zeta} \right) \, \left. \tau \wedge \zeta \right. 
+
\left(U^{\tau}_{\tau \ov{\zeta}} - 4 \,  y^1_{\ov{\zeta}} \right) \, \left. \tau \wedge \ov{\zeta} \right. \\
+
U^{\tau}_{\sigma \rho} \, \left. \sigma \wedge \rho \right.
+
\left.\sigma \wedge \zeta \right.
+
\left. \sigma \wedge \ov{\zeta} \right.,
\end{multline*}

\begin{multline*}
d \sigma = 3 \, \left. \widetilde{\beta}^1 \wedge \sigma \right. + \left. \widetilde{\beta}^6 \wedge \tau \right. \\
+
\left( U^{\sigma}_{\tau \sigma} + 3 \, y^1_{\tau} - y^6_{\sigma} \right) \, \left. \tau \wedge \sigma \right.
+
\left( U^{\sigma}_{\tau \rho} -  y^6_{\rho} \right) \, \left. \tau \wedge \rho \right. \\
+
\left( U^{\sigma}_{\tau \zeta} - y^6_{\zeta} \right) \, \left. \tau \wedge \zeta \right. 
+
\left( U^{\sigma}_{\tau \ov{\zeta}} - y^6_{\ov{\zeta}} \right) \, \left. \tau \wedge \ov{\zeta} \right. \\
+
\left( U^{\sigma}_{\sigma \rho} - 3 \, y^1_{\rho} \right) \, \left. \sigma \wedge \rho \right. 
+
\left( U^{\sigma}_{\sigma \zeta} -3 \,y^1_{\zeta} \right) \, \left. \sigma \wedge \zeta \right. \\
+
\left( U^{\sigma}_{\sigma \ov{\zeta}}  -3 \,y^1_{\ov{\zeta}} \right) \, \left. \sigma \wedge \ov{\zeta} \right.
+
\rho \wedge \zeta
+
\rho \wedge \ov{\zeta}
\end{multline*}

\begin{multline*}
d \rho
=
2 \widetilde{\beta}^{1} \wedge \rho + \widetilde{\beta}^3 \wedge \sigma + \widetilde{\beta}^7 \wedge \tau \\
+
\left( U^{\rho}_{\tau \sigma} + y^3_{\tau} - y^7_{\sigma} \right) \, \left. \tau \wedge \sigma \right.
+
\left( U^{\rho}_{\tau \rho} + 2 \, y^1_{\tau} - y^7_{\rho} \right) \, \left. \tau \wedge \rho \right. \\
+
\left( U^{\rho}_{\tau \zeta} - y^7_{\zeta} \right) \, \left. \tau \wedge \zeta \right. 
+
\left( U^{\rho}_{\tau \overline{\zeta}} - y^7_{\ov{\zeta}} \right) \, \left. \rho \wedge \overline{\zeta} \right. \\
+
\left( U^{\rho}_{\sigma \rho} + 2 \, y^1_{\sigma} - y^3_{\rho} \right) \, \left. \sigma \wedge \rho \right.
+
\left( U^{\rho}_{\sigma \zeta} - y^3_{\zeta} \right) \, \left. \sigma \wedge \zeta \right. \\
+
\left( U^{\rho}_{\sigma \ov{\zeta}} - y^3_{\ov{\zeta}} \right) \, \left. \sigma \wedge \ov{\zeta} \right.
+
\left( U^{\rho}_{\rho \zeta} - 2 \, y^1_{\zeta} \right) \, \left. \rho \wedge \zeta \right.  \\ 
+
\left( U^{\rho}_{\rho \ov{\zeta}} - 2 \, y^1_{\ov{\zeta}} \right) \, \left. \rho \wedge \ov{\zeta} \right.
+
i \, \left. \zeta \wedge \ov{\zeta} \right.
,\end{multline*}

\begin{multline*}
d \zeta
=
\widetilde{\beta^{1}} \wedge \zeta + \widetilde{\beta^{2}} \wedge \rho + \widetilde{\beta^{4}} \wedge \sigma + \widetilde{\beta^8} \wedge \tau \\
+
\left( U^{\zeta}_{\tau \sigma} + y^4_{\tau}- y^8_{\sigma} \right) \, \left. \tau \wedge \sigma \right.
+
\left( U^{\zeta}_{\tau \rho} + y^2_{\tau} - y^8_{\rho} \right) \, \left. \tau \wedge \rho \right. \\
+
\left( U^{\zeta}_{\tau \zeta} + y^1_{\tau} - y^8_{\zeta} \right) \, \left. \tau \wedge \zeta \right.
+
\left( U^{\zeta}_{\tau \ov{\zeta}} - y^8_{\ov{\zeta}} \right) \, \left. \tau \wedge \ov{\zeta} \right. \\
+
\left( U^{\zeta}_{\sigma \rho} + y^2_{\sigma}- y^4_{\rho} \right) \, \left. \sigma \wedge \rho \right.
+
\left( U^{\zeta}_{\sigma \zeta} + y^1_{\sigma} - y^4_{\zeta} \right) \, \left. \sigma \wedge \zeta \right. \\ 
+
\left( U^{\zeta}_{\sigma \ov{\zeta}} - y^4_{\ov{\zeta}} \right) \, \left. \sigma \wedge \ov{\zeta} \right. 
+
\left( U^{\zeta}_{\rho \zeta} + y^1_{\rho} - y^2_{\zeta} \right) \, \left. \rho \wedge \zeta \right. \\
+
\left( U^{\zeta}_{\rho \overline{\zeta}} - y^2_{\ov{\zeta}} \right) \, \left. \rho \wedge \overline{\zeta} \right.
+
\left( U^{\zeta}_{\zeta \overline{\zeta}} - y^1_{\ov{\zeta}} \right) \, \left. \zeta \wedge \overline{\zeta} \right.
.\end{multline*}

We get the following absorbtion equations:

\begin{alignat*}{3}
4 \, y^1_{\sigma} &=U^{\tau}_{\tau \sigma},
& \qquad \qquad  4 \, y^1_{\rho}  &=  U^{\tau}_{\tau \rho} ,
& \qquad \qquad  4 \, y^1_{\zeta} &=  U^{\tau}_{\tau \zeta} , \\
4 \,  y^1_{\ov{\zeta}} &= U^{\tau}_{\tau \ov{\zeta}}, 
& \qquad \qquad - 3 \, y^1_{\tau} + y^6_{\sigma}  & =U^{\sigma}_{\tau \sigma},  & 
\qquad \qquad y^6_{\rho}   & =  U^{\sigma}_{\tau \rho} , \\
y^6_{\zeta}  &= U^{\sigma}_{\tau \zeta},  & \qquad \qquad  
y^6_{\ov{\zeta}}  & =   U^{\sigma}_{\tau \ov{\zeta}}, & \qquad \qquad 
3 \, y^1_{\rho} & =  U^{\sigma}_{\sigma \rho}, \\  
3 \,y^1_{\zeta} & =  U^{\sigma}_{\sigma \zeta}, & \qquad \qquad
3 \,y^1_{\ov{\zeta}} & =  U^{\sigma}_{\sigma \ov{\zeta}}, & \qquad \qquad
-y^3_{\tau} + y^7_{\sigma}  & = U^{\rho}_{\tau \sigma},  \\ 
-2 \, y^1_{\tau} + y^7_{\rho} &=  U^{\rho}_{\tau \rho}, & \qquad \qquad 
y^7_{\zeta}  & = U^{\rho}_{\tau \zeta}, &\qquad \qquad   
y^7_{\ov{\zeta}}  &=  U^{\rho}_{\tau \overline{\zeta}}, \\
-2 \, y^1_{\sigma} + y^3_{\rho}  &= U^{\rho}_{\sigma \rho}, &\qquad \qquad 
y^3_{\zeta}  &=  U^{\rho}_{\sigma \zeta}  , &\qquad \qquad   
 y^3_{\ov{\zeta}}   &= U^{\rho}_{\sigma \ov{\zeta}}, \\ 
2 \, y^1_{\zeta} &= U^{\rho}_{\rho \zeta}, & \qquad \qquad
2 \, y^1_{\ov{\zeta}}   &=   U^{\rho}_{\rho \ov{\zeta}}, & \qquad \qquad
-y^4_{\tau} + y^8_{\sigma} & = U^{\zeta}_{\tau \sigma},\\
-y^2_{\tau} + y^8_{\rho} & =  U^{\zeta}_{\tau \rho},& \qquad \qquad 
- y^1_{\tau} + y^8_{\zeta} & = U^{\zeta}_{\tau \zeta} , & \qquad \qquad
y^8_{\ov{\zeta}} & =U^{\zeta}_{\tau \ov{\zeta}} , \\
-y^2_{\sigma} + y^4_{\rho}  & = U^{\zeta}_{\sigma \rho}, & \qquad \qquad 
-y^1_{\sigma} + y^4_{\zeta} & = U^{\zeta}_{\sigma \zeta}, & \qquad \qquad
y^4_{\ov{\zeta}} & = U^{\zeta}_{\sigma \ov{\zeta}}, \\
-y^1_{\rho} + y^2_{\zeta} & = U^{\zeta}_{\rho \zeta}, & \qquad \qquad 
y^2_{\ov{\zeta}}  & = U^{\zeta}_{\rho \overline{\zeta}}, & \qquad \qquad
 y^1_{\ov{\zeta}} & =  U^{\zeta}_{\zeta \overline{\zeta}}.
\end{alignat*} 
Eliminating $y^1_{\ov{zeta}}$ and $y^1_{\ov{\zeta}}$ among the previous equations leads to
the normalizations:
\begin{equation*}
\begin{aligned}
\bb & = \A \, {\bf B}_0, \\
\cc & = \A^2 \, {\bf C}_0, \\
\ff & = \A^3 \, {\bf F}_0,
\end{aligned}
\end{equation*}
where the functions ${\bf B}_0$, ${\bf C}_0$ and ${\bf F}_0$ are defined by:

\begin{equation*}
\begin{aligned}
{\bf B}_0 & := \frac{3i}{10} \frac{\Lb(B)}{B^{\frac{3}{2}}} - \frac{i}{5} \frac{A}{B^{\frac{1}{2}}}
- \frac{i}{10} \frac{K}{B^{\frac{1}{2}}}- \frac{i}{10} \frac{\LL(B)}{B^{\frac{1}{2}}}
,\\
{\bf C}_0 & := \frac{11}{20} \frac{\LL(B)}{B^{\frac{1}{2}}} + \frac{3}{20} \, B^{\frac{1}{2}}
G + \frac{1}{20} \, \frac{\Lb(B)}{B^{\frac{3}{2}}} - \frac{1}{5} \frac{A}{B^{\frac{1}{2}}} + \frac{3}{20} \frac{K}{B^{\frac{1}{2}}}
,\\
{\bf F}_0 & := 
\frac{1}{10} \, \frac{\LL(B)}{B} + \frac{3}{10}  \, B^{\frac{1}{2}} G + \frac{1}{10} \, \, \frac{\Lb(B)}{B^{\frac{3}{2}}}
- \frac{2}{5} \frac{A}{B^{\frac{1}{2}}} + \frac{3}{10} \, \frac{K}{B^{\frac{1}{2}}}
.\end{aligned}
\end{equation*}

The absorbed structure equations take the form:
\begin{equation*}
\begin{aligned}
d \tau 
&=
4 \, \widetilde{\beta}^1 \wedge \tau 
+
\frac{\mathfrak{I}_1}{\A} \left. \tau \wedge \zeta \right. 
-
\frac{\mathfrak{I}_1}{\A}\left. \tau \wedge \ov{\zeta} \right.
+
3  \, \frac{\mathfrak{I}_1}{\A} \left. \sigma \wedge \rho \right.
+
\left.\sigma \wedge \zeta \right.
+
\left. \sigma \wedge \ov{\zeta} \right.,
\\
d \sigma &= 3 \, \left. \widetilde{\beta}^1 \wedge \sigma \right. + \left. \widetilde{\beta}^6 \wedge \tau \right. 
- \frac{\mathfrak{I}_1}{2 \A} \left. \sigma \wedge \zeta \right.
+
\frac{\mathfrak{I}_1}{2 \A} \left. \sigma \wedge \ov{\zeta} \right.
+
\rho \wedge \zeta
+
\rho \wedge \ov{\zeta}
,\\
d \rho
&=
2 \widetilde{\beta}^{1} \wedge \rho + \widetilde{\beta}^3 \wedge \sigma + \widetilde{\beta}^7 \wedge \tau 
-
\frac{\mathfrak{I}_1}{2 \A}
\left. \rho \wedge \zeta \right.  
+
 \frac{\mathfrak{I}_1}{2 \A} \left. \rho \wedge \ov{\zeta} \right.
+
i \, \left. \zeta \wedge \ov{\zeta} \right.
,\\
d \zeta
& =
\widetilde{\beta^{1}} \wedge \zeta 
+ \widetilde{\beta^{2}} \wedge \rho + \widetilde{\beta^{4}} \wedge \sigma + \widetilde{\beta^8} \wedge \tau 
,
\end{aligned}
\end{equation*}

where the function
$\mathfrak{I}_1$
is a biholomorphic invariant of $M$ and is given by:
\begin{equation*}
\mathfrak{I}_1:=
\frac{1}{2} \, \frac{\LL(B)}{B} + \frac{3}{10}  \, B^{\frac{1}{2}} G - \frac{1}{10} \, \, \frac{\Lb(B)}{B^{\frac{3}{2}}}
+ \frac{2}{5} \frac{A}{B^{\frac{1}{2}}} - \frac{3}{10} \, \frac{K}{B^{\frac{1}{2}}}
.\end{equation*}

We introduce the coframe 
$\omega_2:=\left(\tau_2, \sigma_2, \rho_2, \zeta_2, \ov{\zeta}_2 \right)$ on $M$, defined by:
\begin{equation*}
\left \{
\begin{aligned}
\tau_2 & := \tau_1 \\
\sigma_2 &:= {\bf F}_0 \, \tau_1 + \sigma_1, \\
\rho_2 &:= \rho_1 + {\bf C}_0 \, \sigma_1, \\
\zeta_2 &:= \zeta_1 + {\bf B}_0 \, \rho_1, 
\end{aligned} \right.
\end{equation*}
and the subgroup $G_3 \subset G_2$: 
\begin{equation*}
G_3:= \left\{
\begin{pmatrix}
{\A^4}  & 0 & 0 & 0 & 0 \\
0  & \A^3 & 0 & 0 & 0 \\
\G & 0 & \A^2 & 0 & 0 \\
\hh & \dd & 0 & \A & 0 \\
\kk & \ee & 0 & 0 & \A
\end{pmatrix}
, \, 
\A \in \R \setminus \{0\}, \, \dd, \, \ee, \, \G, \, \hh, \, \kk,  \in \C \right \}.
\end{equation*}
We notice that $\sigma_2$ is a real one-form.
The normalizations:
\begin{equation*}
\bb:= \A \, {\bf B}_0, \qquad  \cc:= \A^2 \, {\bf C}_0, \qquad \ff:= \A^3 \, {\bf F}_0,
\end{equation*}
amount to
consider the subbundle $P^3 \subset P^2$ consisting of those coframes $\omega$ of the form
\begin{equation*}
\omega := g \cdot \omega_2, \qquad \text{where $g$ is a function} \,\,\,  
g: M \stackrel{g}{\longrightarrow}  G_3. 
\end{equation*}

A basis of the Maurer Cartan forms of $G_3$ is given by: 
\begin{equation*}
\begin{aligned}
\gamma^1  &: = {\frac {{ d\A}}{\A}}, \\
\gamma^2 & :=-{\frac {\dd { d\A}}{{\A}^{4}}} + {\frac {{
 d\dd}}{{\A}^{3}}}, \\
\gamma^3 & :=-\frac {  \ee d \A}{\A^4} +
\frac { d\ee}{\A^{3}}, \\
\gamma^4 & :=- 2 \, \frac {  \G d \A}{\A^5} +
\frac { d\G}{\A^{4}}, \\
\gamma^5 & :=- \frac {  \hh d \A}{\A^5} +
\frac { d\hh}{\A^{4}}, \\
\gamma^6 & :=- \frac {  \kk d \A}{\A^5} +
\frac { d\kk}{\A^{4}}.
\end{aligned}
\end{equation*}

We get the following absorbed structure equations for $P^3$:
\begin{multline*}
d \tau = 4 \, \widetilde{\gamma}^1 \wedge \tau 
+
\frac{\mathfrak{I}_1}{\A} \left. \tau \wedge \zeta \right. 
-
\frac{\mathfrak{I}_1}{\A}\left. \tau \wedge \ov{\zeta} \right.
+
3  \, \frac{\mathfrak{I}_1}{\A} \left. \sigma \wedge \rho \right.
+
\left.\sigma \wedge \zeta \right.
+
\left. \sigma \wedge \ov{\zeta} \right.,
\end{multline*}

\begin{multline*}
d \sigma = 3 \left. \widetilde{\gamma}^1 \wedge \sigma \right. \\
+
V^{\sigma}_{\tau \rho} \left. \tau \wedge \rho \right.
+
V^{\sigma}_{\tau \zeta} \left. \tau \wedge \zeta \right.
+
V^{\sigma}_{\tau \ov{\zeta}} \left. \tau \wedge \ov{\zeta} \right. 
+
V^{\sigma}_{\sigma \rho} \left. \sigma \wedge \rho \right.\\
- \frac{\mathfrak{I}_1}{2 \A} \left. \sigma \wedge \zeta \right.
+
\frac{\mathfrak{I}_1}{2 \A} \left. \sigma \wedge \ov{\zeta} \right.
+
\left. \rho \wedge \zeta \right.
+
\left. \rho \wedge \ov{\zeta} \right.
,\end{multline*}\begin{multline*}
d \rho
=
2 \left. \widetilde{\gamma}^{1} \wedge \rho \right. + \left. \widetilde{\gamma}^4 \wedge \tau \right. \\
+
V^{\rho}_{\sigma \rho} \, \sigma \wedge \rho
+
V^{\rho}_{\sigma \zeta} \, \sigma \wedge \zeta
+
V^{\rho}_{\sigma \ov{\zeta}} \, \sigma \wedge \ov{\zeta} \\
+
\frac{\mathfrak{I}_1}{2 \A}
+
\left. \rho \wedge \zeta \right.  
+
 \frac{\mathfrak{I}_1}{2 \A} \left. \rho \wedge \ov{\zeta} \right. 
+
i \, \left. \zeta \wedge \ov{\zeta} \right.
,\end{multline*}

\begin{multline*}
d \zeta
=
\left. \widetilde{\gamma}^{1} \wedge \zeta \right.+ \left. \widetilde{\gamma}^{2} \wedge \sigma \right. +
\left. \widetilde{\gamma}^5 \wedge \tau \right.
+
V^{\zeta}_{\rho \zeta} \left. \rho \wedge \zeta \right.
+
V^{\zeta}_{\rho \overline{\zeta}} \left. \rho \wedge \overline{\zeta} \right.
,\end{multline*}

From the essential torsion coefficients $V^{\sigma}_{\tau \zeta}$, $V^{\sigma}_{\tau \ov{\zeta}}$ and
$V^{\zeta}_{\rho \ov{\zeta}}$,
we obtain the normalizations:
\begin{equation*}
\dd :=  \A \, {\bf D_0}, \qquad   \qquad \G:= \A^2 \, {\bf G_0},
\end{equation*}
where
\begin{equation*}
{\bf D_0}:= i \, {\bf B}_0^2 - \frac{A {\bf B}_0}{B^{\frac{1}{2 }}} + \frac{\Lb\left( {\bf B}_0 \right)}{B^{\frac{1}{2}}}
+ \frac{1}{2} \, \frac{\Lb(B) {\bf B}_0}{B^{\frac{3}{2}}},
\end{equation*}
and
\begin{multline*}
{\bf G_0}:=
- \frac{1}{4} \, \frac{\LL(B)}{B^{\frac{1}{2}}} {\bf F}_0 - {\bf F}_0^2 + \frac{1}{2} \, B^{\frac{1}{2}} G {\bf F}_0  
- \frac{1}{2} \, B^{\frac{1}{2}} \LL \left( {\bf F}_0 \right)  + {\bf C}_0 {\bf F}_0 \\+ \frac{1}{2} \, F B 
+ \frac{1}{4} \, \frac{\Lb(B)}{B^{\frac{3}{2}}} {\bf F}_0
+
\frac{1}{2} \, \frac{K}{B^{\frac{1}{2}}} {\bf F}_0 - \frac{1}{2} \, \frac{\Lb\left( {\bf F}_0 \right)}{B^{\frac{1}{2}}} 
+ \frac{J}{2} - \frac{1}{2} \, \frac{A}{B^{\frac{1}{2}}} {\bf F}_0.
\end{multline*}

We introduce the coframe 
$\omega_3:=\left(\tau_3, \sigma_3, \rho_3, \zeta_3, \ov{\zeta}_3 \right)$ on $M$, defined by:
\begin{equation*}
\left \{
\begin{aligned}
\tau_3 & := \tau_2 \\
\sigma_3 &:= \sigma_2 \\
\rho_3 &:= \rho_2 + {\bf C}_0 \, \tau_2, \\
\zeta_3 &:= \zeta_2 + {\bf D}_0 \, \sigma_2, 
\end{aligned} \right.
\end{equation*}
and the subgroup $G_4 \subset G_3$: 
\begin{equation*}
G_4:= \left\{
\begin{pmatrix}
{\A^4}  & 0 & 0 & 0 & 0 \\
0  & \A^3 & 0 & 0 & 0 \\
0 & 0 & \A^2 & 0 & 0 \\
\hh & 0 & 0 & \A & 0 \\
\ov{\hh} & 0 & 0 & 0 & \A
\end{pmatrix}
,
 \, 
\A \in \R \setminus \{0\}, \, \hh \in \C 
\right \}
\end{equation*}
The normalizations:
\begin{equation*}
\dd :=  \A \, {\bf D_0}, \qquad   \qquad \G:= \A^2 \, {\bf G_0},
\end{equation*}
amount to
consider the subbundle $P^4 \subset P^3$ consisting of those coframes $\omega$ of the form
\begin{equation*}
\omega := g \cdot \omega_3, \qquad \text{where $g$ is a function} \,\,\,  
g: M \stackrel{g}{\longrightarrow}  G_4. 
\end{equation*}

A basis of the Maurer-Cartan forms
is given by:
\begin{equation*}
\begin{aligned}
\delta^1  &: = {\frac {{ d\A}}{\A}}, \\
\delta^2 &: =- \frac {  \hh d \A}{\A^5} +
\frac { d\hh}{\A^{4}}, \\
\end{aligned}
\end{equation*}
together with $\ov{\delta}^2$.

As for the previous step, we determine the structure equations of $P^4$ using formula (\ref{eq:lifted}).
We just write here the expression of $d \zeta$, as it provides a normalization of $\hh$:
\begin{equation*}
d \zeta = \widetilde{\delta}^1 \wedge \zeta + \widetilde{\delta}^2 \wedge \tau +
W^{\zeta}_{\sigma \rho} \left. \sigma \wedge \rho \right. +  W^{\zeta}_{\sigma \zeta} \left. \sigma \wedge \zeta \right. 
+ W^{\zeta}_{\sigma \zeta} \left. \sigma \wedge \ov{\zeta} \right.
,\end{equation*}
for some modified Maurer-Cartan forms $\widetilde{\delta}^1$, $\widetilde{\delta}^2$.

The essential torsion coefficient $W^{\zeta}_{\sigma \zeta}$ can be normalized to $0$,
which is equivalent to the normalization:
\begin{equation*}
\hh := \A \, {\bf H}_0,
\end{equation*}
where 
\begin{multline*}
{\bf H}_0 :=
- {\bf D}_0 \, {\bf F}_0 + {\bf C}_0 \, {\bf D}_0 - \frac{\LL(B)}{B^{\frac{1}{2}}} {\bf D}_0 - \frac{A}{B^{\frac{1}{2}}} 
{\bf D}_0 + {\Lb \left( {\bf D}_0 \right)}{B^{\frac{1}{2}}} +
i \, {\bf B}_0 \, {\bf D}_0 \\
- i \, {\bf B}^2_0 \, {\bf C}_0 +  \frac{A}{B^{\frac{1}{2}}} \, {\bf B}_0 \, {\bf C}_0
- \LL(A) \, {\bf B}_0 
- \frac{\Lb \left( {\bf B}_0 \right)}{B^{\frac{1}{2}}} \, {\bf C}_0 - \frac{1}{2} \,  \frac{\Lb(B)}{B^{\frac{3}{2}}} 
{\bf B}_0 \, {\bf C}_0.
\end{multline*}

Let $G_5$ be the $1$-dimensional Lie subgroup of $G_4$ whose elements $g$ are of the form:
\begin{equation*}
g := \begin{pmatrix}
\A^4 & 0 & 0 & 0 & 0 \\
0 & {\A}^3 & 0 & 0 & 0  \\
0 &0  & \A^2 & 0 & 0 \\
0 &0 & 0 & \A & 0 \\
0 &0 & 0 & 0 & \A
\end{pmatrix}, \qquad \A \in \R \setminus \{ 0 \},
\end{equation*}
and let $\omega_4:=\left(\tau_4, \sigma_4, \rho_4, \zeta_4, \ov{\zeta}_4\right)$ 
be the coframe defined on $M$ by:
\begin{equation*}
\sigma_4 := \sigma_3, \qquad \qquad
\rho_4 : = \rho_3,  \qquad   \qquad
\zeta_4 : = \zeta_3 + {\bf H}_0 \, \tau_3.
\end{equation*}
The normalization of $\hh$ is equivalent to the reduction
of $P^4$ to a subbundle $P^5$ consisting of those coframes $\omega$ on $M$ such that:
\begin{equation*}
\omega := g \cdot \omega_3, \qquad \text{where $g$ is a function} \,\,\,  
g: M \stackrel{g}{\longrightarrow}  G_4. 
\end{equation*}

The Maurer-Cartan forms of $G_5$ are spanned by: 
\begin{equation*}
\alpha:= \frac{d \A}{\A}.
\end{equation*}

Proceeding as in the previous steps, we determine the structure equations of $P^4$
which take the absorbed form:

\begin{dgroup} \label{eq:coframe}
\begin{equation*}
d \tau 
=
4 \, \Lambda \wedge \tau 
+
\frac{\mathfrak{I}_1}{\A} \left. \tau \wedge \zeta \right. 
-
\frac{\mathfrak{I}_1}{\A}\left. \tau \wedge \ov{\zeta} \right.
+
3  \, \frac{\mathfrak{I}_1}{\A} \left. \sigma \wedge \rho \right.
+
\left.\sigma \wedge \zeta \right.
+
\left. \sigma \wedge \ov{\zeta} \right.,
\end{equation*}
\begin{multline*}
d \sigma = 3 \, \Lambda \wedge \sigma  \\ 
+ \frac{\mathfrak{I}_2}{ \A ^3} \, \tau \wedge \rho 
+ \frac{\mathfrak{I}_3}{ \A ^2} \, \tau \wedge \zeta
+ \frac{\ov{\mathfrak{I}}_3}{ \A ^2} \, \tau \wedge \ov{\zeta}
+  
\frac{\mathfrak{I}_4}{ \A ^2} \, \sigma \wedge \rho \\
- \frac{\mathfrak{I}_1}{2 \A} \left. \sigma \wedge \zeta \right.
+
\frac{\mathfrak{I}_1}{2 \A} \left. \sigma \wedge \ov{\zeta} \right. 
 +
\rho \wedge \zeta
+
\rho \wedge \ov{\zeta}
,
\end{multline*}
\begin{multline*}
d \rho
=
2 \Lambda \wedge \rho \\
+
\frac{\mathfrak{I}_5}{ \A^5}
\left. \tau \wedge \sigma \right.  
+
\frac{\mathfrak{I}_6}{ \A^4}
\left. \tau \wedge \rho \right.
+
\frac{\mathfrak{I}_7}{ \A^3}
\left. \tau \wedge \zeta \right.
+
\frac{\ov{\mathfrak{I}}_7}{ \A^3}
\left. \tau \wedge \ov{\zeta} \right. 
+
\frac{\mathfrak{I}_8}{\A^3} \left. \sigma \wedge \rho \right.\\
+
\frac{\mathfrak{I}_9}{ \A^2}
\left. \sigma \wedge \zeta \right.
+
\frac{\ov{\mathfrak{I}}_9}{ \A^2}
\left. \sigma \wedge \ov{\zeta} \right.
-
\frac{\mathfrak{I}_1}{2 \A}
\left. \rho \wedge \zeta \right.  
+
 \frac{\mathfrak{I}_1}{2 \A} \left. \rho \wedge \ov{\zeta} \right.
+
i \, \left. \zeta \wedge \ov{\zeta} \right.
,
\end{multline*}
\begin{multline*}
d \zeta
=
\Lambda \wedge \zeta \\
+
\frac{\mathfrak{I}_{10}}{ \A^6}
\left. \tau \wedge \sigma \right.  
+
\frac{\mathfrak{I}_{11}}{ \A^5}
\left. \tau \wedge \rho \right.
+
\frac{\mathfrak{I}_{12}}{ \A^4}
\left. \tau \wedge \zeta \right.
+
\frac{\mathfrak{I}_{13}}{ \A^4}
\left. \tau \wedge \ov{\zeta} \right. 
\\
+
\frac{\mathfrak{I}_{14}}{ \A^4}
\left. \sigma \wedge \rho \right.
+
\frac{\mathfrak{I}_{15}}{ \A^3}
\left. \sigma \wedge \zeta \right.
,\end{multline*}
\end{dgroup}
where $\Lambda$ is a modified-Maurer Cartan form:
\begin{equation*}
\Lambda:= \frac{d \A}{\A} - X_{\tau}\, \tau - X_{\sigma} \, \sigma  -X_{\rho} \, \rho 
- X_{\zeta} \, \zeta \, -
X_{\overline{\zeta}} \, \overline{\zeta},
\end{equation*}
and where
\begin{equation*}
\mathfrak{I}_i, \quad i= 1 \dots 15,
\end{equation*}
are biholomorphic invariants of $M$.

The exterior derivative of $\Lambda$ can be determined by taking the exterior derivative of the four
previous equations which leads to the so-called Bianchi-Cartan's identities.
We obtain the fact that $d \Lambda$ does not contain any $2$-form involving the $1$-form $\Lambda$, namely:
\begin{equation}
\label{eq:final}
d \Lambda= \sum_{\nu \mu} 
X_{\nu \mu} \left. \nu \wedge \mu \right., \qquad \nu, \, \mu = \tau, \, \sigma, \, \rho,  \,  \zeta, \ov{\zeta}.
\end{equation}

\section{Cartan Connection} \label{connection}
We recall that the model for CR-manifolds belonging to general class ${\sf III_2}$
is the CR-manifold defined by the equations:
\begin{equation*}
{\sf N}: \qquad \qquad
\begin{aligned}
w_1 & = \ov{w_1} + 2 \, i \, z \ov{z}, \\
w_2 & = \ov{w_2} + 2 \, i \, z \ov{z} \left( z + \ov{z} \right), \\
w_3 & = \ov{w_3} + 2 i \, z \zb \left( z^2 + \frac{3}{2} \, z \zb + \zb^2 \right).
\end{aligned}
\end{equation*}
Its Lie algebra of infinitesimal CR-automorphisms is given by the following theorem:
\begin{theorem}{\cite{pocchiola2}.}
The model of the class ${\sf III_2}${\rm :}
\begin{equation*}
{\sf N}: \qquad \qquad
\begin{aligned}
w_1 & = \ov{w_1} + 2 \, i \, z \ov{z}, \\
w_2 & = \ov{w_2} + 2 \, i \, z \ov{z} \left( z + \ov{z} \right), \\
w_3 & = \ov{w_3} + 2 i \, z \zb \left( z^2 + \frac{3}{2} \, z \zb + \zb^2 \right),
\end{aligned}
\end{equation*}
has a ${\bf 6}$-dimensional Lie algebra of CR-automorphisms
${\sf aut_{CR}}({\sf N})  $. 
A basis for the Maurer-Cartan forms of ${\sf aut_{CR}}({\sf N})$ is provided
by the $6$ differential $1$-forms  $\tau$, $\sigma$, $\rho$, $\zeta$,  $\ov{\zeta}$, $\alpha$,
which satisfy the Maurer-Cartan equations:
\begin{equation*}
\begin{aligned}
d \tau &= 4 \left. \alpha \wedge \tau \right.
+ \left.\sigma \wedge \zeta \right.
+ \left. \sigma \wedge \ov{\zeta} \right., \\
d \sigma &= 3 \left. \alpha \wedge \sigma \right. + \left. \rho \wedge \zeta \right. 
+ \left.  \rho \wedge \ov{\zeta} \right., \\
d \rho &  =  2 \left. \alpha \wedge \rho \right. + i \, \left. \zeta \wedge \ov{\zeta} \right., \\
d \zeta &= \left. \alpha \wedge \zeta \right., \\
d \ov{\zeta} &= \left. \alpha \wedge \ov{\zeta} \right.,\\
d \alpha &= 0.
\end{aligned}
\end{equation*}

\end{theorem}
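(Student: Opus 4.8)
The plan is to determine the Lie algebra ${\sf aut_{CR}}({\sf N})$ directly, by finding all infinitesimal CR-automorphisms of ${\sf N}$, and then to read off its Maurer--Cartan equations. Writing $w_j = u_j + i v_j$, the three defining equations are equivalent to the rigid graphing relations $v_1 = z\ov{z}$, $v_2 = z\ov{z}(z+\ov{z})$, $v_3 = z\ov{z}(z^2 + \tfrac32 z\ov{z} + \ov{z}^2)$, none of which involves $u_1,u_2,u_3$. An infinitesimal CR-automorphism is the real part of a holomorphic vector field
\[
X = f\,\partial_z + g_1\,\partial_{w_1} + g_2\,\partial_{w_2} + g_3\,\partial_{w_3},
\]
with $f,g_1,g_2,g_3$ holomorphic, subject to the tangency conditions $(X+\ov X)(v_j - \phi_j)\big|_{\sf N} = 0$ for $j=1,2,3$, where $\phi_j$ denotes the corresponding graphing function. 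First I would write out these three real equations explicitly on ${\sf N}$.

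Next I would exploit the rigid, polynomial structure to reduce the tangency system to finite-dimensional linear algebra. Standard finite-type arguments guarantee that the coefficients $f,g_j$ are polynomial, so restricting to ${\sf N}$ turns each tangency condition into a polynomial identity in $z,\ov z$ whose coefficients are affine in the Taylor data of $f,g_1,g_2,g_3$; matching the coefficient of each monomial $z^a\ov z^b$ yields a finite linear system. Solving it, I expect to produce an explicit basis of six generators: the three real translations $\mathrm{Re}\,\partial_{w_1},\mathrm{Re}\,\partial_{w_2},\mathrm{Re}\,\partial_{w_3}$ coming from the rigidity, a single weighted dilation realising the grading $(4,3,2,1,1)$ already visible in the group $G_5$ of Section~\ref{P1}, and a complex-conjugate pair of weight-one fields accounting for the $\zeta,\ov\zeta$ directions. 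The count $\dim {\sf aut_{CR}}({\sf N}) = 6$ then follows from the solvability analysis.

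With the six generators in hand, I would compute their pairwise Lie brackets to obtain the structure constants, and then dualise: the dual coframe is a left-invariant coframe on the (local) symmetry group, which after the normalising renaming becomes $(\tau,\sigma,\rho,\zeta,\ov\zeta,\alpha)$. Translating the structure constants through $d\omega^k = -\tfrac12\,c^k_{ij}\,\omega^i\wedge\omega^j$ produces exactly the asserted Maurer--Cartan equations; in particular $d\alpha = 0$ records that the dilation spans a one-dimensional complement acting on the five-dimensional graded nilpotent ideal spanned by $\tau,\sigma,\rho,\zeta,\ov\zeta$, while the terms $\sigma\wedge\zeta$, $\rho\wedge\zeta$, $\zeta\wedge\ov\zeta$ appearing in $d\tau,d\sigma,d\rho$ encode the nilpotent brackets.

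The main obstacle will be the completeness part of the dimension count: showing that the cubic third equation, with its specific coefficients $1,\tfrac32,1$, admits no symmetries beyond these six. This is precisely the exceptional degeneracy defining class ${\sf III}_2$, so the bookkeeping of the higher-order tangency conditions must be carried out carefully to rule out extra solutions. An alternative route, consistent with the framework of this paper, avoids this by instead running the reductions of Section~\ref{P1} on ${\sf N}$: one computes the fundamental functions $A,B,E,F,G$ explicitly for ${\sf N}$, verifies that every biholomorphic invariant $\mathfrak{I}_i$ (equivalently the $\mathfrak{J}_i$ of Theorem~\ref{thm:intro}) vanishes identically, so that (\ref{eq:coframe}) together with (\ref{eq:final}) collapses to the constant-coefficient equations above; the theory of $\{e\}$-structures with constant structure functions (Lie's third theorem) then identifies $P^5$ locally with a six-dimensional Lie group acting simply transitively, and since CR-automorphisms of ${\sf N}$ correspond bijectively to symmetries of this $\{e\}$-structure, one concludes $\dim {\sf aut_{CR}}({\sf N}) = 6$ with the stated Maurer--Cartan equations.
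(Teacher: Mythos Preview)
The present paper does not itself prove this theorem; it is quoted from the companion reference \cite{pocchiola2}. From the description in the Introduction, that reference establishes the result by applying Cartan's equivalence method directly to the model ${\sf N}$: one computes the fundamental functions $A,B,E,F,G$ for ${\sf N}$, checks that every invariant $\mathfrak{I}_i$ vanishes, so that the structure equations (\ref{eq:coframe})--(\ref{eq:final}) collapse to the constant-coefficient system displayed, and then invokes the standard theory of $\{e\}$-structures with constant structure functions to identify these as the Maurer--Cartan equations of a six-dimensional Lie group acting by CR-automorphisms on ${\sf N}$. This is exactly your \emph{alternative route}, so on that branch your proposal matches the cited argument.

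Your primary approach---solving the tangency equations for holomorphic vector fields directly, then computing brackets and dualising---is equally valid and is in fact the subject of the separate companion paper \cite{pocchiola3}. Compared with the equivalence-method route, it has the advantage of producing explicit polynomial generators on $\C^4$ and of making the upper bound $\dim {\sf aut_{CR}}({\sf N}) \le 6$ self-contained, without appealing to the reduction machinery of Section~\ref{P1}; the trade-off, as you correctly anticipate, is that the bookkeeping for the cubic third equation (ruling out any extra symmetries) must be done by hand. The equivalence-method route gets the structure equations and the dimension count essentially for free once the general reduction has already been carried out, at the cost of relying on that reduction.
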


Let us write $\mathfrak{g}$ instead of ${\sf aut_{CR}}({\sf N})$ for the Lie algebra of inifinitesimal automorphisms of ${\sf N}$
and let 
$\left(e_{\alpha}, e_{\tau}, e_{\sigma}, e_{\rho}, e_{\zeta}, e_{\ov{\zeta}} \right)$ be the dual basis of the 
basis of Maurer-Cartan 1-forms: $\left(\alpha, \tau, \sigma, \rho, \zeta, \ov{\zeta} \right)$.
From the above structure equations, the Lie brackets structure of $\mathfrak{g}$ is given by:

\begin{alignat*}{3}
\big[ e_{\alpha} , e_{\tau} \big] & = -4 \, e_{\tau}, \qquad \qquad 
& \big[ e_{\sigma} , e_{\zeta} \big] & = - \, e_{\tau}, \qquad \qquad
& \big[ e_{\sigma} , e_{\ov{\zeta}} \big] & = - \, e_{\tau}, \qquad \qquad \\
\big[ e_{\alpha} , e_{\sigma} \big] & = -3 \, e_{\sigma}, \qquad \qquad 
& \big[ e_{\alpha} , e_{\rho} \big] & = -2 \, e_{\rho}, \qquad \qquad 
& \big[ e_{\alpha} , e_{\zeta} \big] & = - \, e_{\zeta}, \qquad \qquad \\
\big[ e_{\alpha} , e_{\ov{\zeta}} \big] & = - \, e_{\ov{\zeta}}, \qquad \qquad 
& \big[ e_{\rho} , e_{\zeta} \big] & = -e_{\sigma}, \qquad \qquad 
& \big[ e_{\rho} , e_{\ov{\zeta}} \big] & = - e_{\sigma},\qquad \qquad \\
 \big[ e_{\zeta} , e_{\ov{\zeta}} \big] & = - i \, e_{\rho}, 
\end{alignat*}
the remaining brackets being equal to zero.

We refer to \cite{Kobayashi}, p. 127-128, for the definition of a Cartan connection.
Let $\mathfrak{g}_0 \subset \mathfrak{g}$ be the subalgebra spanned by $e_{\alpha}$,
$\mathfrak{G}$ the connected, simply connected Lie group whose Lie algebra is $\mathfrak{g}$ and $\mathfrak{G}_0$ the closed
$1$-dimensional subgroup of $\mathfrak{G}$ generated by $\mathfrak{g}_0$. 
We notice that $\mathfrak{G}_0 \cong G_5$, so that $P^5$ is a principal bundle over $M$ with structure group
$\mathfrak{G}_0$, and that $\dim \mathfrak{G} / \mathfrak{G}_0 = \dim M  = 5$.

Let $\left( \Lambda, \tau, \sigma, \rho, \zeta, \ov{\zeta} \right)$ be the coframe of $1$-forms on $P^5$ 
whose structure equation are given by (\ref{eq:coframe}) -- (\ref{eq:final})
and $\omega$ the $1$-form on $P$ with values in $\mathfrak{g}$
defined by:
\begin{equation*}
\omega(X):= \Lambda (X) \, e_{\alpha} + \tau(X) \, e_{\tau} + \sigma(X) \, e_{\sigma} + \rho(X) \, e_{\rho} + \zeta(X) \, e_{\zeta},
+ \ov{\zeta} (X) \, e_{\ov{\zeta}}, 
\end{equation*} 
for $X \in T_pP^5$.
We have:

\begin{theorem}
$\omega$ is a Cartan connection on $P^5$.
\end{theorem}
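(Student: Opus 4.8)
The plan is to verify the three defining axioms of a Cartan connection from \cite{Kobayashi}, pp.~127--128, for the $\mathfrak{g}$-valued $1$-form $\omega$ on $P^5$: (i) $\omega_p : T_pP^5 \to \mathfrak{g}$ is a linear isomorphism for each $p$; (ii) $\omega$ reproduces the fundamental vector fields of the $\mathfrak{G}_0$-action, i.e.\ $\omega(A^{\smallbullet}) = A$ for every $A \in \mathfrak{g}_0$ where $A^{\smallbullet}$ is the vector field on $P^5$ generated by $A$; (iii) $R_g^* \omega = \mathrm{Ad}(g^{-1}) \circ \omega$ for every $g \in \mathfrak{G}_0$. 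Since $\dim \mathfrak{g} = 6 = \dim P^5$ and $\omega$ is by construction the direct sum of the coframe $(\Lambda, \tau, \sigma, \rho, \zeta, \ov{\zeta})$ read into the dual basis $(e_{\alpha}, e_{\tau}, e_{\sigma}, e_{\rho}, e_{\zeta}, e_{\ov{\zeta}})$, axiom (i) is immediate from the fact that $(\Lambda, \tau, \sigma, \rho, \zeta, \ov{\zeta})$ is a coframe on $P^5$, established in Theorem~\ref{thm:intro} and in the construction of Section~\ref{P1}.

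For axiom (ii): the structure group $\mathfrak{G}_0 \cong G_5$ acts on $P^5$ by $\A \mapsto \A \cdot s$, and the generator $e_{\alpha}$ of $\mathfrak{g}_0$ corresponds to the fundamental vector field $\A \,\partial_{\A}$. By the very definition of the modified Maurer--Cartan form $\Lambda = \frac{d\A}{\A} - X_{\tau}\tau - X_{\sigma}\sigma - X_{\rho}\rho - X_{\zeta}\zeta - X_{\ov{\zeta}}\ov{\zeta}$, and because $\tau, \sigma, \rho, \zeta, \ov{\zeta}$ are pullbacks of forms on $M$ (hence annihilate vertical vectors), one computes $\Lambda(\A\,\partial_{\A}) = 1$ and $\tau(\A\,\partial_{\A}) = \sigma(\A\,\partial_{\A}) = \cdots = 0$. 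Hence $\omega(\A\,\partial_{\A}) = e_{\alpha}$, which is exactly the required normalization since $e_{\alpha}$ is the chosen generator of $\mathfrak{g}_0 \cong \mathrm{Lie}(\mathfrak{G}_0)$.

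Axiom (iii) is the main point, and it splits into two pieces. First, the equivariance of the coframe under the $G_5$-action: since $\mathfrak{G}_0$ is one-dimensional, $\mathrm{Ad}(g^{-1})$ acts on $\mathfrak{g}$ diagonally in the basis $(e_{\alpha}, e_{\tau}, e_{\sigma}, e_{\rho}, e_{\zeta}, e_{\ov{\zeta}})$ with eigenvalues read off from the brackets $[e_{\alpha}, e_{\tau}] = -4e_{\tau}$, $[e_{\alpha}, e_{\sigma}] = -3 e_{\sigma}$, $[e_{\alpha}, e_{\rho}] = -2 e_{\rho}$, $[e_{\alpha}, e_{\zeta}] = -e_{\zeta}$, $[e_{\alpha}, e_{\ov{\zeta}}] = -e_{\ov{\zeta}}$; concretely, writing $g = \exp(t\,e_{\alpha})$ so that $\A \mapsto \A e^{t}$, one gets $\mathrm{Ad}(g^{-1}) e_{\tau} = e^{4t} e_{\tau}$, etc. On the other hand, from the construction in Section~\ref{P1} the forms scale as $\tau = \A^4 \tau_4$, $\sigma = \A^3 \sigma_4$, $\rho = \A^2 \rho_4$, $\zeta = \A\,\zeta_4$, $\ov{\zeta} = \A\,\ov{\zeta}_4$ with $\tau_4, \dots, \ov{\zeta}_4$ pulled back from $M$, so $R_g^*\tau = \A^4 e^{4t}\tau_4 = e^{4t}\tau$, and similarly for the others; for the $\Lambda$-component one checks $R_g^* \Lambda = \Lambda$ using that $\frac{d\A}{\A}$ is right-invariant and that the correction terms $X_{\bullet}$ transform compatibly (this is consistent with $[e_{\alpha}, e_{\alpha}] = 0$, i.e.\ $\mathrm{Ad}(g^{-1}) e_{\alpha} = e_{\alpha}$). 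Matching the two computations term by term gives $R_g^*\omega = \mathrm{Ad}(g^{-1})\circ\omega$.

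The expected obstacle is the bookkeeping in axiom (iii) for the $\Lambda$-component: one must confirm that the specific choice of coefficients $X_{\tau}, X_{\sigma}, X_{\rho}, X_{\zeta}, X_{\ov{\zeta}}$ made in the absorption/normalization procedure of Section~\ref{P1} is compatible with strict right-invariance of $\Lambda$ along the fibers, rather than merely invariance modulo the horizontal forms. This follows because those coefficients are functions on $P^5$ whose fiber-transformation law is dictated by equation~\eqref{eq:lifted} together with the normalizations $\bb = \A {\bf B}_0$, $\dd = \A {\bf D}_0$, etc., all of which were chosen precisely so that the residual structure group is the homogeneous $G_5$; once this is in place, the remainder is the routine diagonal matching described above, and the theorem follows. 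Finally, one remarks that the structure equations \eqref{eq:coframe}--\eqref{eq:final}, together with the bracket table of $\mathfrak{g}$, show that the curvature $\Omega := d\omega + \frac{1}{2}[\omega, \omega]$ of this connection is a horizontal $2$-form, which is the content of the statement that $d\Lambda$ contains no term involving $\Lambda$ and that the "semi-basic" part of each structure equation matches the Maurer--Cartan equation of $\mathfrak{g}$.
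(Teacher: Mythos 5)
Your proposal verifies the same three axioms as the paper and the treatments of the isomorphism condition and of $\omega(e_{\alpha}^*)=e_{\alpha}$ are essentially identical, but for the equivariance axiom you take a genuinely different route: you integrate the $\mathfrak{G}_0$-action and match the explicit fibre scalings $\tau=\A^4\tau_4$, $\sigma=\A^3\sigma_4$, $\rho=\A^2\rho_4$, $\zeta=\A\,\zeta_4$ against the diagonal eigenvalues of ${\sf Ad}(g^{-1})$, whereas the paper checks only the infinitesimal form $\LL_{e_{\alpha}^*}\omega=-{\sf ad}_{e_{\alpha}}\omega$ via Cartan's formula $\LL_{e_{\alpha}^*}\omega=e_{\alpha}^*\lrcorner\, d\omega+d(e_{\alpha}^*\lrcorner\,\omega)$ and reads the answer directly off the structure equations \eqref{eq:coframe}--\eqref{eq:final}. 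Since $\mathfrak{G}_0$ is connected the two are equivalent, and your version has the merit of making the weight decomposition of $\mathfrak{g}$ visible; however, the one step you flag as the ``expected obstacle'' --- the strict invariance $R_g^*\Lambda=\Lambda$ --- is exactly the point your argument leaves soft, since you only assert that the absorption coefficients $X_{\tau},\dots,X_{\ov{\zeta}}$ ``transform compatibly.'' The clean way to close this, and it is precisely what the paper's computation does, is to note that $\Lambda(e_{\alpha}^*)=1$ is constant and that by \eqref{eq:final} the $2$-form $d\Lambda$ contains no wedge involving $\Lambda$, so that $e_{\alpha}^*\lrcorner\, d\Lambda=0$ and hence $\LL_{e_{\alpha}^*}\Lambda=0$; you do record the fact that $d\Lambda$ has no $\Lambda$-component in your closing remark on curvature, but you should invoke it here rather than appeal to the unproven transformation law of the $X_{\bullet}$. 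With that substitution your argument is complete and matches the paper in substance.
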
 

\begin{proof}
We shall check that the following three conditions hold:
\begin{enumerate}
\item{\label{it:vertical} $\omega(e_{\alpha}^*) = e_{\alpha}$, where $e_{\alpha}^*$ is the vertical 
vector field on $P^4$ generated by the action of $e_{\alpha}$, }
\item{\label{it:fibre} $R_{a}^* \, \omega = {\sf Ad}(a^{-1})\,  \omega$ for every $a \in \mathfrak{G}_0$,}  
\item{\label{it:isom} for each $p \in P^5$, $\omega_p$ is an isomorphism 
$T_p P^5 \stackrel{\omega_p}{\longrightarrow} \mathfrak{g}$}.
\end{enumerate}

Condition (\ref{it:isom}) is trivially satisfied as $\left( \Lambda, \tau, \sigma, \rho, \zeta, \ov{\zeta} \right)$
is a coframe on $P^5$ and thus defines a basis of $T_p^*P^5$ at each point $p$.

Condition (\ref{it:vertical}) follows simply from the fact that $\Lambda$ is a modified-Maurer Cartan form
on $P^5$: 
\begin{equation*}
\Lambda:= \frac{d \A}{\A} - X_{\tau}\, \tau - X_{\sigma} \, \sigma  -X_{\rho} \, \rho 
- X_{\zeta} \, \zeta \, -
X_{\overline{\zeta}} \, \overline{\zeta},
\end{equation*}
so that 
\begin{equation*}
\omega(e_{\alpha}^*) = \Lambda (e_{\alpha^*}) = e_{\alpha},
\end{equation*}
as 
\begin{equation*}
\tau(e_{\alpha}) = \sigma(e_{\alpha}^*)= \rho(e_{\alpha}^*)= \zeta(e_{\alpha}^*)= \ov{\zeta}(e_{\alpha}^*)= 0,
\qquad \frac{d \A}{\A} (e_{\alpha}^*) = 1,
\end{equation*}
since $e_{\alpha}^*$ is a vertical vector field on $P^5$.

Condition (\ref{it:fibre}) is equivalent to its infinitesimal counterpart:
\begin{equation*}
\LL_{e_{\alpha}^*} \, \omega = - {\sf ad}_{e_{\alpha}} \omega,
\end{equation*}
where 
$\LL_{e_{\alpha}^*} \, \omega$ is the Lie derivative of $\omega$ by the vector field
$e_{\alpha}^*$ and where ${\sf ad}_{e_{\alpha}}$ is the linear map 
$\mathfrak{g} \rightarrow \mathfrak{g}$ defined by:
${\sf ad}_{e_{\alpha}} (X) = \big[ e_{\alpha}, X \big].$
We determine $\LL_{e_{\alpha}^*} \, \omega$ with the help of Cartan's formula:
\begin{equation*}
\LL_{e_{\alpha}^*} \, \omega = e_{\alpha^*}  \, \lrcorner \,  d \omega + d \left( e_{\alpha}^* \, \lrcorner \, \omega \right)
,\end{equation*}
with
\begin{equation*}
d \left( e_{\alpha}^* \, \lrcorner \, \omega \right) = 0
\end{equation*}
from condition (\ref{it:vertical}).
The structure equations (\ref{eq:coframe})--(\ref{eq:final}) give:
\begin{equation*}
e_{\alpha^*}  \, \lrcorner \,  d \omega =  
\begin{pmatrix}
0 \\
4 \, \tau \\
3 \, \sigma \\
2 \, \rho \\
\zeta \\
\ov{\zeta}
\end{pmatrix}
,\end{equation*}
which is easily seen being equal to $ - {\sf ad}_{e_{\alpha}} \omega$ from the Lie bracket structure of
$\mathfrak{g}$.
\end{proof}

\newpage

\end{document}